\newtheorem{theorem}{Theorem}[section]
\newtheorem{corollary}[theorem]{Corollary}
\newtheorem{lemma}[theorem]{Lemma}
\newtheorem{remark}[theorem]{Remark}
\newtheorem{definition}[theorem]{Definition}
\newtheorem{example}[theorem]{Example}
\newcommand{\qed}{\hfill $\square$\medskip}
\begin{document}

\title{More on the $2$-restricted optimal pebbling number}

\author{
Saeid Alikhani$^{}$\footnote{Corresponding author}
\and
Fatemeh Aghaei
}

\date{\today}

\maketitle

\begin{center}
Department of Mathematical Sciences, Yazd University, 89195-741, Yazd, Iran\\
{\tt alikhani@yazd.ac.ir, aghaeefatemeh29@gmail.com}
\end{center}


\begin{abstract}
Let $G=(V,E)$ be a simple graph. 
 A function $f:V\rightarrow \mathbb{N}\cup \{0\}$ is called a configuration of pebbles on the vertices of $G$  and the the weight of $f$ is  $w(f)=\sum_{u\in V}f(u)$  which is  just the total number of pebbles assigned to vertices. A pebbling step from a vertex $u$ to one of its
 neighbors $v$  reduces $f(u)$ by two and increases $f(v)$  by one. A pebbling configuration $f$ is said to be solvable if for every vertex $ v $, there exists a sequence (possibly empty) of pebbling moves that results in a pebble on $v$. 
A pebbling configuration $f$ is a $t$-restricted pebbling configuration (abbreviated $t$RPC) if $f(v)\leq t$ for all $v\in V$. The  $t$-restricted optimal pebbling number $\pi_t^*(G)$  is  the minimum weight of a solvable $t$RPC on $G$. 
 Chellali et.al. [Discrete Appl. Math. 221 (2017) 46-53] characterized connected
 graphs $G$ having small $2$-restricted optimal pebbling numbers and characterization of graphs $G$ with $\pi_2^*(G)=5$ stated as an open problem. In this paper, we solve this problem. We improve the upper bound of the $2$-restricted optimal pebbling number of trees of order $n$.
Also we study $2$-restricted optimal pebbling number of some grid graphs,  corona and neighborhood corona of two specific graphs.

\end{abstract}

\noindent{\bf Keywords:} Roman domination, optimal pebbling number, pebbling number, corona.

\medskip
\noindent{\bf AMS Subj.\ Class.}:  05C99. 

\section{Introduction and definitions}
 Let $G=(V,E)$ be a simple graph of order $n$. 
 A function $f:V\rightarrow \mathbb{N}\cup \{0\}$ is called a configuration of pebbles on the vertices of $G$  and the quantity $w(f)=\sum_{u\in V}f(u)$
 is called the weight of $f$ which is  just the total number of pebbles assigned to vertices. A pebbling step from a vertex $u$ to one of its neighbors $v$  reduces $f(u)$ by two and increases $f(v)$  by one. A pebbling configuration $f$ is said to be solvable, if for every vertex $ v $, there exists a sequence (possibly empty) of pebbling moves that results in a pebble on $v$. The pebbling number $ \pi(G) $ equals the minimum number $k$ such that every pebbling configuration $ f $ with $ w(f) = k $ is solvable.
 
The configuration with a single pebble on every vertex
except the target shows that $\pi(G)\geq n$, while the configuration with
$2^{ecc(r)}-1$  pebbles on the farthest vertex from $r$, and no
pebbles elsewhere, shows that $\pi(G)\geq 2 ^{{\rm diam}(G)}$ when $r$ is
chosen to have $ecc(r)={\rm diam}(G)$.
As usual let $Q^d$ be the $d$-dimensional hypercube  as the graph on all binary $d$-tuples,
pairs of which that differ in exactly one coordinate are
joined by an edge. Chung \cite{Chung} proved that $\pi(Q^d)=2^d$. Graphs $G$ like
$Q^d$  which have $\pi(G)=|V(G)|$  are  known as Class $0$.
The terminology comes from a lovely theorem of Pachter,
Snevily, and Voxman \cite{Pachter}, which states that if ${\rm diam}(G)=2$, 
then $\pi(G)\leqslant n+1$. 
Therefore, there are two classes of diameter two graphs, Class $0$ and
 Class $1$. The
Class $0$ graphs are $2$-connected.

Pachter et al. \cite{Pachter}, defined the optimal pebbling number $ \pi^{*}(G) $ to be the minimum weight of a solvable pebbling configuration of $G$. A solvable pebbling configuration of $G$ with weight $ \pi^{*}(G) $ is called a $ \pi^{*}$-configuration. The decision problem associated with computing the optimal pebbling number was shown to be NP-Complete in \cite{Milans}.

 Chellali et al. in \cite{Chellali}, introduced a generalization of the optimal pebbling number. A pebbling configuration $  f $ is a $ t $-restricted pebbling configuration (abbreviated $ t$RPC), if $ f(v) \leqslant t $ for all $ v \in V $. They defined the $ t $-restricted optimal pebbling number $ \pi^{*}_{t}(G) $ as the minimum weight of a solvable $t$RPC  on $G$. If $f$ is a solvable $ t$RPC  on $G$ with $ w(f)=\pi^{*}_{t}(G) $, then $f$ is called a $\pi^{*}_{t}$-configuration of $G$. We note that the limit of $t$ pebbles per vertex applies only to the initial configuration. That is, a pebbling move may place more than $t$ pebbles on a vertex.

We use the following terminology.  The open neighborhood of a vertex $ v \in V $ is the set $ N(v)=\lbrace u \mid uv \in E\rbrace $ of vertices adjacent to $ v $, and its closed neighborhood is $N[v] = N(v) \cup \lbrace v\rbrace$. The open neighborhood of a set $ S \subset V $ of vertices is the set $ N(S)=\bigcup_{v\in S} N(v) $, while the closed neighborhood of a set $S$ is the set $ N[S]=\bigcup_{v\in S} N[v] $. The degree of a vertex $v$ is $ \deg(v) = |N(v)|$. The subgraph of $G$ induced by a set of vertices $ S $ is denoted by $ G[S]$.

A set $ S \subset V $ is a dominating set of a  $ G $, if every vertex in $ V\setminus S $ is adjacent to at least one vertex in $ S $, and $ S $ is a total dominating set, if every vertex in $ V $ is adjacent to at least one vertex in $ S $. The domination number $ \gamma(G) $ (respectively, total domination number $ \gamma_{t}(G) $) of a graph $ G $ is the minimum cardinality of a dominating set (respectively, total dominating set) in $G$.

Characterization of connected
 graphs $G$ having $\pi_2^*(G)=k$ for small $k$ is an interesting problem and  has solved for $k=2,3,4$ as follows.
 
 \begin{theorem} {\rm \cite{Chellali}}\label{1}
 Suppose that $G$ is a nontrivial connected graph. Then
 \begin{enumerate}
 	\item[(i)] $ \pi^{*}_{2}(G)=2 $ if and only if $ \gamma(G)=1 $ and $ \gamma_{t}(G)=2 $. 
 	
 	\item[(ii)]
   $ \pi^{*}_{2}(G)=3 $ if and only if $ \gamma(G)=\gamma_{t}(G)=2 $.
 	
 	\item[(iii)] 
 	$\pi_2^*(G)=4$ if and only if $\gamma_t(G)\geq 3$ and there exist two vertices $u$ and $v$ such that $\{u,v\}\cup (N(u)\cap N(v))$ dominates $G$. 
 	
 \end{enumerate}
 \end{theorem}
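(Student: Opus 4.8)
The plan is to treat the three equivalences in turn, using the earlier parts as they become available, and exploiting the fact that a $2$RPC of weight $3$ or $4$ has a very limited ``reach'': with so few pebbles the only nontrivial manoeuvre is to empty one stack onto a vertex that already carries pebbles, and one cannot chain two such merges. For (i), first note $\pi_2^*(G)\ge 2$ for every nontrivial graph, since a lone pebble cannot move. A solvable $2$RPC of weight $2$ is either two pebbles on one vertex $v$, whose reachable set is exactly $N[v]$, or one pebble on each of two vertices, which forces $G=K_2$. Hence $\pi_2^*(G)=2$ if and only if $G$ has a universal vertex, i.e. $\gamma(G)=1$; and $\gamma(G)=1$ automatically gives $\gamma_t(G)=2$, which is the stated form.

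For (ii): by (i), $\pi_2^*(G)\ge 3$ exactly when $\gamma(G)\ge 2$. For the ``if'' direction, given $\gamma(G)=\gamma_t(G)=2$ choose adjacent $u,v$ with $\{u,v\}$ a total dominating set, and place $2$ pebbles on $u$ and $1$ on $v$: any target in $N(u)$ is reached by one move from $u$, and any target in $N(v)$ by first moving $u\to v$ and then $v\to$ target, so since $N(u)\cup N(v)=V$ this configuration is solvable and $\pi_2^*(G)\le 3$. For ``only if'', classify the solvable $2$RPCs of weight $3$: the $(1,1,1)$ type forces $|V|=3$, hence $\gamma(G)=1$, which is excluded; so the configuration is $2$ pebbles on some $u$ and $1$ on some $v$, whose reachable set is $N[u]\cup\{v\}$ when $u\not\sim v$ and $N[u]\cup N[v]$ when $u\sim v$. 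Solvability then gives, respectively, $N[u]=V\setminus\{v\}$ with $v$ adjacent to some $w\in N(u)$ — so $\{u,w\}$ is a total dominating set — or $\{u,v\}$ itself a total dominating set. Either way $\gamma_t(G)\le 2$, and with $\gamma(G)\ge 2$ we conclude $\gamma(G)=\gamma_t(G)=2$.

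For (iii): combining (i) and (ii) gives $\pi_2^*(G)\le 3\iff\gamma_t(G)\le 2$, hence $\pi_2^*(G)\ge 4\iff\gamma_t(G)\ge 3$; so it remains to show that, under $\gamma_t(G)\ge 3$, $\pi_2^*(G)=4$ iff some set $\{u,v\}\cup(N(u)\cap N(v))$ dominates $G$. For ``if'', put $2$ pebbles on each of $u$ and $v$: targets in $N[u]\cup N[v]$ are immediate, while a common neighbour $w\in N(u)\cap N(v)$ can be loaded with $2$ pebbles by the moves $u\to w$ and $v\to w$ and then reach all of $N(w)$; so the reachable set contains $N[\{u,v\}\cup(N(u)\cap N(v))]=V$, giving $\pi_2^*(G)\le 4$. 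For ``only if'', take a solvable weight-$4$ $2$RPC; the $(1,1,1,1)$ type forces $|V|=4$, impossible since every connected graph on at most four vertices has $\gamma_t\le 2$ (it has a spanning path or a spanning $K_{1,3}$). A $(2,2)$ configuration on $u,v$ yields the required pair directly. A configuration with $2$ pebbles on $u$ and $1$ on each of $a,b$ has reachable set $N[u]$, enlarged by $N[a]$ if $u\sim a$ and by $N[b]$ if $u\sim b$; if $u$ is adjacent to neither then $N[u]=V$ and $\gamma(G)=1$, a contradiction, and if $u$ is adjacent to exactly one, say $a$, then $\{u,a\}$ is a total dominating set, again contradicting $\gamma_t(G)\ge 3$; so $a,b\in N(u)$, whence $u\in N(a)\cap N(b)$ and $\{a,b\}\cup(N(a)\cap N(b))\supseteq\{u,a,b\}$ dominates $G$.

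The main obstacle is making the ``reachable set'' claims airtight: one must verify that with only three or four pebbles no sequence of moves can deposit a pebble outside the set listed in each case — there is simply no room for a second merge, for a three-term merge, or for a merge followed by a long walk. This case bookkeeping is elementary but is where most of the work lies; once it is done, the translations into $\gamma$ and $\gamma_t$ are immediate.
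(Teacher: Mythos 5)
This theorem is quoted by the paper from Chellali et al.\ without an internal proof, so the comparison can only be against your argument itself; parts (i) and (ii) of your proposal are sound, and your reduction of (iii) to ``$\pi_2^*(G)\ge 4\iff\gamma_t(G)\ge 3$'' plus the sufficiency of a $(2,2)$ configuration on a pair $u,v$ with $\{u,v\}\cup(N(u)\cap N(v))$ dominating is also fine. The genuine gap is in the ``only if'' direction of (iii), in your treatment of a $(2,1,1)$ configuration with $2$ pebbles on $u$ and one pebble on each of $a,b$. You compute the reachable set as $N[u]$, enlarged by $N[a]$ or $N[b]$ when $u$ is adjacent to $a$ or $b$, but you forget that $a$ and $b$ are covered simply because they carry a pebble: the correct coverage is $N[u]\cup\{a,b\}$ (plus $N[a]$, resp.\ $N[b]$, if adjacent). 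Consequently your two dismissals fail. If $u$ is adjacent to neither $a$ nor $b$, solvability gives only $N[u]\cup\{a,b\}=V$, not $N[u]=V$, so no contradiction with $\gamma(G)\ge 2$; and if $u\sim a$ only, solvability gives $N[u]\cup N[a]\cup\{b\}=V$, which does not make $\{u,a\}$ a total dominating set because $b$ need not be adjacent to either. Both situations really occur for graphs with $\pi_2^*=4$: on $P_5=x_1x_2x_3x_4x_5$ (where $\gamma_t=3$ and $\pi_2^*=\lceil 10/3\rceil=4$), the configuration with $2$ pebbles on $x_3$ and one each on $x_1,x_5$ is optimal and of the first kind, and the configuration with $2$ on $x_2$, one on $x_3$ and one on $x_5$ is optimal and of the second kind. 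In these cases the dominating pair demanded by the theorem exists (for $P_5$ take $u=x_2$, $v=x_4$, with common neighbour $x_3$) but it is not read off your case analysis, so the necessity proof is incomplete exactly where the real work lies.

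To close the gap you must genuinely handle $(2,1,1)$ configurations, e.g.\ by showing that a solvable weight-$4$ $2$RPC of this type can always be transformed into a solvable $(2,2)$ configuration of the same weight (redistributing the two single pebbles onto suitable neighbours of $u$, using connectivity and $\gamma_t\ge 3$ to rule out degenerate shapes), or by locating directly two vertices $x,y$ such that $\{x,y\}\cup(N(x)\cap N(y))$ dominates $G$ from the structure $N[u]\cup\{a,b\}=V$. This is the same kind of redistribution argument the present paper carries out in its own Theorem 2.4 for the weight-$5$ case (its ``Case 1''), and it is not merely bookkeeping: without it, the implication $\pi_2^*(G)=4\Rightarrow\exists\, u,v$ with $\{u,v\}\cup(N(u)\cap N(v))$ dominating is unproven.
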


\medskip
 Characterization of graphs $G$ with $\pi_2^*(G)=5$ stated as an open problem in \cite{Chellali}. In the next section, we solve this problem. In Section 3,   we obtain an upper bound for the $2$-restricted optimal pebbling number
of trees. In Section 4, we study 2-restricted optimal pebbling number of some grid graphs and finally in Section 5, we obtain the $2$-restricted optimal pebbling number for corona and neighborhood corona of two specific graphs. 

\section{Characterization of graphs $G$ with $\pi^{*}_{2}(G) = 5$ } 

As stated in \cite{Chellali}, for every nontrivial graph $G$ of order $n$, $ 2 \leqslant \pi^{*}_{2}(G) \leqslant n $. Clearly, the upper bound is attained if and only if every component of $ G $ is $ K_{1} $ or $ K_{2} $. On the other hand, the lower bound is attained for any graph of order $ n $ having a vertex of degree $ n-1 $, such as complete graphs $ K_{n} $ ($ n\geqslant 2 $) and wheels $ W_{1,n} $ ($ n\geqslant 3 $).

 \begin{figure}[ht]
\centering
\includegraphics[scale=.5]{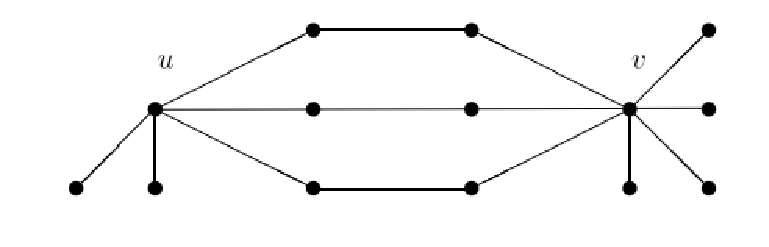}
\caption{The graph $ G_{3,2,4} $ }\label{4}
\end{figure}
 A consequence  of Theorem \ref{1}    involves graphs with no triangles and no induced $ C_{4} $. Let $\mathcal{F} $ be the family of graphs $ G_{i,j,k} $ formed from two vertices $ u $ and $ v $ by adding $ i $ disjoint paths of length $ 3 $ between vertices $ u $ and $ v$, ~$j $ vertices of degree $1$ adjacent to $ u $, and $ k $ vertices of degree $ 1 $ adjacent to $ v $, such that $ i \geqslant 1, i + j \geqslant 2 $, and $ i + k\geqslant 2 $. As an example, the graph $ G_{3,2,4} $ in $\mathcal{F} $ is given in Figure \ref{4}. In fact, two vertices $ u $ and $ v $ dominate graph $ G\in \mathcal{F} $.
 
 We need the following results: 
 \begin{corollary}{\rm\cite{Chellali}}\label{2}
 Let $G$ be a nontrivial connected graph of girth at least five. Then $ \pi^{*}_{2}(G)=4 $ if and only if $ \gamma_{t}(G)=3 $ or $G \in \mathcal{F}$.
 \end{corollary}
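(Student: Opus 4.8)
The plan is to derive Corollary~\ref{2} directly from Theorem~\ref{1}(iii), using only that girth at least five forbids triangles and induced $4$-cycles. Throughout, ``a set $D$ dominates $G$'' means $N[D]=V(G)$, and I will repeatedly use two elementary facts: a pair of adjacent vertices whose closed neighbourhoods cover $V(G)$ is a total dominating set of size $2$, and three vertices inducing a $P_3$ whose closed neighbourhood covers $V(G)$ form a total dominating set of size $3$.

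Assume first that $\pi_2^*(G)=4$. By Theorem~\ref{1}(iii) there are vertices $u,v$ with $D=\{u,v\}\cup(N(u)\cap N(v))$ dominating $G$, and $\gamma_t(G)\ge 3$; moreover $u\ne v$, since a $2$RPC of weight $4$ cannot place $4$ pebbles on a single vertex. If $uv\in E(G)$, then $u$ and $v$ have no common neighbour (no triangle), so $D=\{u,v\}$ is a total dominating set of size $2$, contradicting $\gamma_t(G)\ge 3$; hence $uv\notin E(G)$, and then the absence of an induced $C_4$ gives $|N(u)\cap N(v)|\le 1$. If $N(u)\cap N(v)=\{w\}$, then $\{u,v,w\}$ induces a $P_3$ with centre $w$, so $\gamma_t(G)=3$. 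The substantive case is $N(u)\cap N(v)=\emptyset$: here domination forces $V(G)=\{u,v\}\cup A\cup B$ with $A=N(u)$, $B=N(v)$, $A\cap B=\emptyset$, and $uv\notin E(G)$. I would then use girth $\ge5$ to show that $A$ and $B$ are independent (no triangle at $u$, resp.\ at $v$), that each vertex of $A$ has at most one neighbour in $B$ and vice versa (a vertex of $A$ with two neighbours in $B$ yields a $4$-cycle through $v$), and that no two vertices of $A$ share a neighbour other than $u$ (else a $4$-cycle); hence the $A$--$B$ edges form a matching of some size $i$, and connectedness forces $i\ge 1$. The unmatched vertices of $A$ are then leaves at $u$ and those of $B$ are leaves at $v$, so $G=G_{i,j,k}$ with $i\ge1$. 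If $i+j\ge2$ and $i+k\ge2$ then $G\in\mathcal{F}$; otherwise $i=1$ and, say, $j=0$, so $G$ is the path $u,a_1,b_1,v$ with some leaves at $v$ (the leafless case being excluded by $\gamma_t(G)\ge3$), and $\{a_1,b_1,v\}$ is a total dominating set, giving $\gamma_t(G)=3$.

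For the converse, suppose $\gamma_t(G)=3$ or $G\in\mathcal{F}$. If $\gamma_t(G)=3$, a minimum total dominating set induces a graph with no isolated vertex, hence $P_3$ or $K_3$; girth $\ge5$ excludes $K_3$, so it induces a $P_3$, say with edges $xy$ and $yz$, and then $\{x,z\}\cup(N(x)\cap N(z))\supseteq\{x,y,z\}$ dominates $G$; since $\gamma_t(G)\ge3$, Theorem~\ref{1}(iii) gives $\pi_2^*(G)=4$. If $G=G_{i,j,k}\in\mathcal{F}$, then $u$ and $v$ dominate $G$ and have no common neighbour, so $\{u,v\}\cup(N(u)\cap N(v))=\{u,v\}$ dominates $G$, and it remains to check $\gamma_t(G)\ge3$. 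For this I would assume $\{p,q\}$ is a total dominating set with $pq\in E(G)$ and run through the possible adjacent pairs in $G_{i,j,k}$ — namely $\{u,\ell\}$ for a leaf $\ell$, $\{u,a_m\}$, $\{a_m,b_m\}$, and the $v$-symmetric pairs — exhibiting in each case, using $i+j\ge2$ and $i+k\ge2$, a vertex with no neighbour in $\{p,q\}$; this contradiction gives $\gamma_t(G)\ge3$, and Theorem~\ref{1}(iii) again yields $\pi_2^*(G)=4$.

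The step I expect to require the most care is the structural analysis when $N(u)\cap N(v)=\emptyset$: one must invoke the girth hypothesis to rule out every stray edge inside $A\cup B$ and to confirm that unmatched neighbours of $u$ and of $v$ are genuine leaves, so that $G$ is \emph{exactly} some $G_{i,j,k}$; and one must verify that the degenerate parameter values $i+j=1$ or $i+k=1$ are absorbed into the $\gamma_t(G)=3$ alternative rather than leaving graphs outside both alternatives.
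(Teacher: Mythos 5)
Your proposal is correct. Note that the paper itself offers no proof of this statement: it is quoted verbatim from Chellali et al.\ \cite{Chellali} as a known consequence of Theorem \ref{1}, so there is no in-paper argument to compare against line by line. Your derivation from Theorem \ref{1}(iii) is sound and essentially self-contained: the adjacent case is correctly killed by the triangle-free condition together with $\gamma_t(G)\geq 3$; the case $|N(u)\cap N(v)|=1$ correctly yields an induced $P_3$ that is a total dominating set, forcing $\gamma_t(G)=3$; and in the case $N(u)\cap N(v)=\emptyset$ your girth argument (independence of $N(u)$ and $N(v)$, the $A$--$B$ edges forming a matching, unmatched neighbours being leaves) does pin down $G$ as exactly some $G_{i,j,k}$ with $i\geq 1$, while the degenerate values $i+j=1$ or $i+k=1$ are correctly absorbed into the $\gamma_t(G)=3$ alternative (the case $i=1$, $j=k=0$ being $P_4$, excluded by $\gamma_t\geq 3$). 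The converse direction, including the finite check that no adjacent pair totally dominates $G_{i,j,k}$ under $i+j\geq 2$ and $i+k\geq 2$, is also correct, so your write-up would serve as a complete proof of the cited corollary.
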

 \begin{theorem}{\rm\cite{Chellali}}\label{3}
 If $T$ is a nontrivial tree of order $ n $, then $ \pi_{2}^{*}(T)\leqslant \lceil \frac{5n}{7} \rceil $.
 \end{theorem}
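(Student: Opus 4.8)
\medskip
\noindent\textbf{Proof strategy.} I would prove the bound by strong induction on $n$. For the base cases $2\le n\le 7$ one notes that, by Theorem~\ref{1} and Corollary~\ref{2}, a nontrivial tree $T$ of such order has $\pi_2^*(T)\le 4$ unless $\gamma_t(T)\ge 4$, and the few remaining small trees are checked by hand; the tight one is $P_7$, which carries the solvable $2$RPC $(0,2,0,1,0,2,0)$ of weight $5=\lceil 35/7\rceil$. The engine of the induction is the elementary \emph{splitting bound} $\pi_2^*(T)\le\pi_2^*(T_1)+\pi_2^*(T_2)$ whenever $T_1,T_2$ are the two components of $T-e$ for an edge $e$: the disjoint union of solvable $2$RPCs on the two sides is again a solvable $2$RPC, since any pebbling sequence carried out inside one side stays legal after pebbles are added on the other side. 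Using this together with the arithmetic fact that $\lceil 5n/7\rceil-\lceil 5(n-m)/7\rceil\ge\lfloor 5m/7\rfloor$ for all $m\ge 0$, the theorem reduces to the purely structural statement: \emph{every tree on $n\ge 8$ vertices contains a subtree $S$, joined to the rest of $T$ by a single edge, with $\pi_2^*(S)\le\lfloor 5|S|/7\rfloor$}; peeling off such an $S$ and invoking the induction hypothesis on $T-S$ then finishes.

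To find such an $S$ I would fix a longest path $v_0v_1\cdots v_\ell$ with $v_0$ a leaf and split into cases according to the local structure near $v_0$; note that, because the path is longest, every neighbour of $v_1$ other than $v_2$ is a leaf and every component of $T-v_3$ missing $v_2$ is a path of length at most $2$ out of $v_3$. If $v_1$ has at least two leaf neighbours, take $S=\{v_1\}\cup(\text{leaves of }v_1)$: it is a star on $|S|\ge 3$ vertices, solvable by placing $2$ pebbles on $v_1$, and $2\le\lfloor 5|S|/7\rfloor$. If $v_1$ and $v_2$ both have degree $2$, then $\{v_0,v_1,v_2\}$ is a pendant $P_3$, solvable with $2$ pebbles; more generally, if $v_1,\dots,v_6$ all have degree $2$, take $S=\{v_0,\dots,v_6\}\cong P_7$, solvable with $5$ pebbles. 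The remaining configurations are those in which the bottom of $T$ is a short path ($4$ or $5$ vertices) ending at a branch vertex, or $v_2$/$v_3$ carries a small broom; in each of these I would take $S$ to be that short pendant path together with a controlled part of the branch structure above it (the branch vertex and one further pendant subtree of it), chosen so that $S$ is a small spider or caterpillar that is solvable within its $\lfloor 5|S|/7\rfloor$ budget by an explicit configuration (typically $2$ pebbles on the branch vertex plus one or two isolated pebbles on far leaves).

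The hard part is precisely this last family of cases. A pendant \emph{path} on $4$ or $5$ vertices is \emph{not} solvable within its budget ($\pi_2^*(P_4)=3>2$, $\pi_2^*(P_5)=4>3$), and the splitting-bound arithmetic genuinely fails for $m\in\{4,5\}$ for suitable $n$, so one really must merge such a pendant with enough of the adjacent branching and then verify a tailored pebbling configuration; keeping this merge both large enough to fix the ratio and small enough to stay attached by a single edge, over all the shapes the branch vertex can have, is where the bookkeeping lives. A natural way to tame it --- and, I suspect, close to the actual argument --- is to carry a slightly strengthened induction hypothesis that also guarantees a pebble can be brought cheaply onto a prescribed vertex (the attachment point), which lets a short bad pendant always be absorbed into the recursion on the larger side. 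Whichever device one uses, the underlying idea is the same dichotomy: locally a tree either offers a cheap high-degree vertex or looks like a path, and paths tile at ratio exactly $5/7$.
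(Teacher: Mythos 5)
The paper does not prove this statement at all: it is quoted from \cite{Chellali}, so there is no internal proof to compare your argument with, and it must stand on its own. Its skeleton is sound: the splitting bound $\pi_2^*(T)\le \pi_2^*(T_1)+\pi_2^*(T_2)$ for the two components of $T-e$ is valid (adding pebbles never destroys solvability, and each side solves its own vertices), the inequality $\lceil 5n/7\rceil \ge \lceil 5(n-m)/7\rceil+\lfloor 5m/7\rfloor$ is correct, and the base cases $n\le 7$ are a finite check. But this machinery concentrates all of the content in the structural claim that every tree on $n\ge 8$ vertices has a pendant subtree $S$ (a component of $T-e$ for some edge $e$) with $\pi_2^*(S)\le \lfloor 5|S|/7\rfloor$, and that claim is exactly what you do not prove. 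You settle three easy cases and describe the rest as ``bookkeeping,'' with a recipe (take the bad pendant path together with the branch vertex and one further pendant subtree of it) that fails on concrete trees. For example, let $x$ be joined to the rest of the tree by a single edge and to two vertices $c_1,c_2$, each of which carries two pendant paths on two vertices. A longest path ends $a_0,a_1,c_1$ with $\deg(a_1)=2$ and $\deg(c_1)=3$, so none of your easy cases applies; the pendant component at $c_1$ is a $P_5$ with $\pi_2^*(P_5)=4>\lfloor 25/7\rfloor=3$, and your merged $S$ (both $P_5$-shaped branches plus $x$, an $11$-vertex tree) has $\pi_2^*(S)=8>\lfloor 55/7\rfloor=7$. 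In this tree every pendant subtree visible near the bottom of the longest path ($P_1$, $P_2$, the $P_5$, the $11$-vertex block) is over budget, so no local choice of the kind your case analysis produces exists; the argument must climb further up the tree (where such gadgets can be stacked) or change form. Incidentally your case list is internally inconsistent: a pendant path on $4$ or $5$ vertices ending at a branch vertex already falls under your ``$v_1,v_2$ of degree $2$'' case (cut off the $P_3$), so the genuinely open situation is precisely $\deg(v_1)=2$, $\deg(v_2)\ge 3$, i.e.\ the broom/cascade configurations above.

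You do anticipate the difficulty and suggest strengthening the induction hypothesis so that a pebble can be brought cheaply onto a prescribed attachment vertex; some device of this kind (or a global weighting/discharging over a suitable tree decomposition) is indeed what is needed, but you neither formulate the strengthened statement nor verify that it propagates through the recursion. As written, then, the proposal is a plausible plan whose decisive lemma --- the existence of a within-budget pendant piece, or its strengthened substitute --- is missing, and the one explicit mechanism you offer for the hard cases is refuted by the example above; so the proof does not close.
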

 
 In what follows, we characterize connected graphs $ G $ having $ \pi^{*}_{2}(G)=5 $.
 \begin{theorem}
 Let $G$ be a nontrivial connected graph. Then
 $ \pi^{*}_{2}(G)=5 $ if and only if $ \gamma_{t}(G)\geqslant 4 $ and while for any $ u, v\in V(G) $, $  \lbrace u, v \rbrace\cup(N(u)\cap N(v))\ $ cannot dominate $ G $, one of the following conditions happens:
 \begin{itemize}
 \item[i)]
 There are three vertices $ u, v $ and $ w\in N(u)\cap N(v) $ such that $ \lbrace u, v, w\rbrace\cup(N(u)\cap N(v))\cup(N(u)\cap N(w))\cup(N(v)\cap N(w))$ dominates $ G $.
 \item[ii)]
 There are  $ u , v $ and $ w\in N(v) $ such that $ \lbrace u, v, w\rbrace\cup(N(u)\cap N(v))\cup(N(u)\cap N(w))$ dominates $ G $.
 \item[iii)]
 There are  $ u , v $ and $ w\in N(N(u)\cap N(v)) $ such that $ \lbrace u, v, w\rbrace\cup(N(u)\cap N(v))$ dominates $ G $.
 \end{itemize}  
 \end{theorem}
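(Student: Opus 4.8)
The plan is to mimic the structure of the characterizations in Theorem~\ref{1}, translating between solvable $2$RPC's of small weight and domination-type conditions. The key dictionary is: if $f$ is a $2$RPC, then $v$ is reachable from $f$ essentially because either $f(v)\geq 1$, or $v$ has a neighbor carrying $2$ pebbles, or $v$ lies within distance $2$ of enough pebbles arranged so that two pebbling steps deliver a pebble to $v$. For weight exactly $5$ with the $2$-restriction, the support of $f$ has size $3$, $4$, or $5$, and the possible "pebble patterns'' are limited: a $(2,2,1)$ pattern on three vertices, a $(2,2,2)$-like pattern truncated to weight $5$ (i.e.\ $(2,2,1)$ again, or $(2,1,1,1)$), or $(1,1,1,1,1)$. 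I would first enumerate these patterns and, for each, write down exactly which vertices are reachable, thereby obtaining the "reachability closure'' of each pattern as an explicit neighborhood expression. The three cases (i), (ii), (iii) in the statement correspond precisely to the three genuinely distinct weight-$5$ patterns that are not reducible to a lower-weight solvable configuration: two pebbles on each of $u,v,w$ with $w\in N(u)\cap N(v)$ (case i); two pebbles on $u$ and $v$ and two on a vertex $w$ adjacent only to $v$ (case ii); and two on $u$, two on $v$, and two on a vertex $w$ two steps away, reachable through $N(u)\cap N(v)$ (case iii).

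For the forward direction, I assume $\pi_2^*(G)=5$ and take a $\pi_2^*$-configuration $f$ of weight $5$. Since $\pi_2^*(G)=5>4$, Theorem~\ref{1}(iii) gives $\gamma_t(G)\geq 3$, and moreover no pair $\{u,v\}$ with $\{u,v\}\cup(N(u)\cap N(v))$ dominating $G$ can exist (else $\pi_2^*(G)\leq 4$); one checks $\gamma_t(G)\geq 4$ by observing that a total dominating set of size $3$ would let us place a suitable weight-$\leq 4$ configuration. Then I analyze the at most five vertices in $\mathrm{supp}(f)$. A vertex with $f$-value $1$ that is a cut in the "$2$-pebble graph'' can often be discarded, reducing to $3$ or $4$ well-placed vertices; the combinatorics forces one of the three listed configurations of vertices $u,v,w$, and the requirement that $f$ be \emph{solvable} translates exactly into the stated domination condition for that case. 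For the converse, given $u,v,w$ as in (i), (ii) or (iii), I exhibit the explicit weight-$5$ $2$RPC (two pebbles on each of $u,v,w$, or the appropriate variant) and verify directly that every vertex is reachable, using the hypothesis $\gamma_t(G)\geq 4$ and the failure of the pair-domination condition to rule out weight $\leq 4$, so that the configuration is optimal and $\pi_2^*(G)=5$.

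The main obstacle I anticipate is the \emph{exhaustiveness} of the case split in the forward direction: showing that every solvable weight-$5$ $2$RPC, after possibly shifting pebbles or relabeling, realizes one of exactly these three patterns and no "fourth type'' slips through. In particular I must handle configurations whose support has size $4$ or $5$ carefully — e.g.\ $(1,1,1,1,1)$ spread across five vertices, or $(2,1,1,1)$ — and argue that any such solvable configuration can be transformed (without increasing weight and without violating the $2$-restriction) into one of the canonical forms in (i)--(iii), or else already has weight $\leq 4$, contradicting optimality. A secondary subtlety is making sure the three cases are stated so that the weaker special cases are genuinely excluded by the standing hypotheses (that $\gamma_t(G)\geq 4$ and that no $\{u,v\}\cup(N(u)\cap N(v))$ dominates), so that the biconditional is clean; I would verify each "if'' direction produces a configuration that is both solvable and not improvable, and each "only if'' branch lands in exactly one listed case.
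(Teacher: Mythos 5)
Your overall strategy is the same as the paper's (use Theorem~\ref{1} for the lower bound, enumerate the possible weight-$5$ patterns $(2,2,1)$, $(2,1,1,1)$, $(1,1,1,1,1)$, and match the position of the third vertex to conditions (i)--(iii)), but your central ``dictionary'' is stated incorrectly: you repeatedly describe the canonical configurations as placing \emph{two} pebbles on each of $u$, $v$ and $w$, which has weight $6$, not $5$. The configurations that actually realize (i)--(iii) are $f(u)=f(v)=2$, $f(w)=1$, with $w\in N(u)\cap N(v)$, $w\in N(v)$, or $w\in N(N(u)\cap N(v))$ respectively, and the precise shape of the domination conditions depends on $f(w)=1$: for instance, in case (i) a vertex at distance two from $w$ is reachable only by first feeding $w$ a pebble from $u$ or $v$ and then combining at a vertex of $N(u)\cap N(w)$ or $N(v)\cap N(w)$, which is exactly why those two intersection terms (and not all of $N(w)$ at distance two) appear in condition (i). With two pebbles on $w$ the coverage, and hence the equivalent domination condition, would be different, so the biconditional you set out to prove would not come out as stated. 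This is not a cosmetic slip; the correctness of both directions hinges on computing the coverage of the $(2,2,1)$ configuration exactly.

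The second issue is that the part you yourself flag as the main obstacle --- exhaustiveness of the case analysis in the ``only if'' direction --- is precisely the content of the paper's proof, and you leave it unresolved. The paper disposes of $(1,1,1,1,1)$ by noting that no pebbling move is possible from such a configuration, so solvability forces $G$ to have order $5$, and then the spanning-tree bound $\pi_2^*(G)\leq\pi_2^*(T)\leq\lceil 5n/7\rceil=4$ gives a contradiction with $\pi_2^*(G)=5$; and it handles $(2,1,1,1)$ by a redistribution argument showing the support induces (after shifting pebbles along short paths, and excluding a $4$-cycle, which would yield a dominating pair and weight $\leq 4$) one of three shapes that can be re-pebbled as a $(2,2,1)$ configuration. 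In the $(2,2,1)$ case the paper also shows $w\in N(u)\cup N(v)\cup N(N(u)\cap N(v))$, since otherwise $w$'s pebble can be relocated, which is what pins down the three subcases. Your proposal would need to supply these reductions (with the corrected weight-$5$ configurations) before it constitutes a proof.
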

 \begin{proof}
 Assume that  $ \gamma_{t}(G)\geqslant 4 $ and for any $ u, v\in V(G) $, $  \lbrace u, v \rbrace\cup(N(u)\cap N(v))\ $ cannot dominate $ G $, so by Theorem \ref{1}, $ \pi^{*}_{2}(G)\geqslant 5 $. To show that $ \pi^{*}_{2}(G)=5 $, we give a solvable $ 2RPC $ with weight $5$. Clearly, according to the conditions, it suffices to   consider $ f(u)=2 $, $ f(v)=2 $ and $ f(w)=1$.

Conversely, let $ \pi^{*}_{2}(G)=5 $. By Corollary \ref{2}, we have  $ \gamma_{t}(G)\geqslant 4 $. We shall assert that one of the three conditions is satisfied. Let $ f $ be a $ \pi_{2}^{*} $-configuration on $ G $. If five vertices are assigned one pebble, then clearly $ G $ has order $5$. Since by Theorem \ref{3}, $ \pi_{2}^{*}(G)\leqslant \pi_{2}^{*}(T) $ for spanning tree $ T $, so for every graph $ G $ of order $5$, $ \pi_{2}^{*}(G)\leqslant 4 $  and this is a contradiction.

Hence, either three vertices are assigned one pebble and one vertex is assigned two pebbles, or two vertices are assigned two pebbles each and one vertex are assigned one pebble. We  consider these two possibilities for $ f $.

 \begin{figure}[ht]
\centering
\includegraphics[scale=0.8]{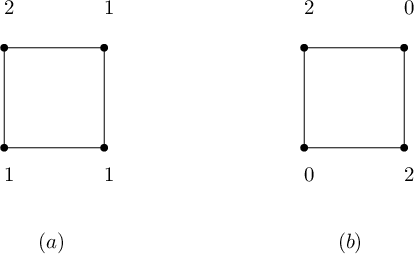}
\caption{there are $ u, v $ that $  \lbrace u, v \rbrace\cup(N(u)\cap N(v))\ $ dominates $ G $.}\label{13}
\end{figure}

Case 1) Let $ S=\lbrace u, v, w, h\rbrace  $ such that $ f(u)=2, f(v)=f(w)=f(h)=1 $, and $ f(x)=0 $ for all $ x\in V\setminus S$. Since $ f $ is a $ \pi_{2}^{*} $-configuration of $ G $, it follows that $ S $ dominates $ G $. Therefore any vertex $ V\setminus S $ is adjacent to $ S $. Since $ G $ is connected, so either vertices $ v, w, $ and $h $ are adjacent to vertex $ u $ or there is a path from $ u $ to them such that contain at most one internal vertex without pebble. By new distribution, $ G[S] $ is connected. Note that $ G[S] $ does not have $4$-cycle, otherwise there are $ u, v $ that $  \lbrace u, v \rbrace\cup(N(u)\cap N(v))\ $ dominates $ G $ (see Figure \ref{13}). Therefore three cases (a), (b) or (c) in Figure \ref{12} will be obtained. Therefore we can put the pebbles such that  Case 2 occurs (see Figure \ref{12}).

 \begin{figure}[h!]
\centering
\includegraphics[scale=0.7]{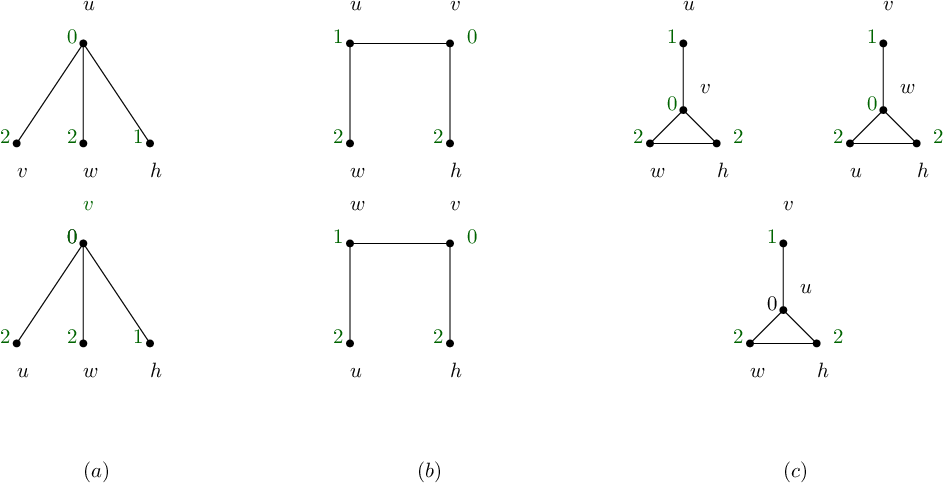}
\caption{New distribution of $ G[S] $. }\label{12}
\end{figure}

Case 2)  Let $ S=\lbrace u, v, w\rbrace  $ such that $ f(u)=f(v)=2, f(w)=1 $, and $ f(x)=0 $ for all $ x\in V\setminus S$. Since $ f $ is a $ \pi_{2}^{*} $-configuration of connected graph $ G $, so $ w\in N(u)\cup N(v)\cup N(N(u)\cap N(v))$, otherwise $ w $ is adjacent to a vertex say $ x $ which  is dominated by $  \lbrace u, v \rbrace\cup(N(u)\cap N(v))$, so one can change to $ f(w)=0, f(x)=1$. Therefore, the graph $ G[S] $ becomes one of the graphs  in Figure \ref{11}. Note that in  Figure \ref{11},  we see  the equivalences of $(a)$ and $(i)$, $(b)$ and $(ii)$, and finally  $(c)$  and $(iii)$. 
Therefore, the result follows. \qed
 \end{proof}

 \begin{figure}[h!]
\centering
\includegraphics[scale=0.8]{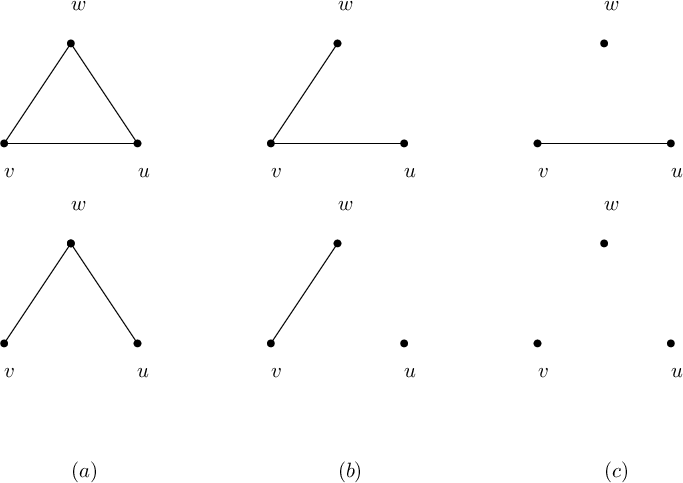}
\caption{$ f(u)=f(v)=2, f(w)=1 $, and $ f(x)=0 $ for all $ x\in V\setminus S$.}\label{11}
\end{figure}

We close  this section with an example.

\begin{example}  The Petersen graph $P$ in Figure \ref{5}, has $ \pi^{*}_{2}(P)=5 $.  Note that this assignment is indeed a $ \pi^{*}_{2} $-configuration of $P$.
	\end{example} 
 \begin{figure}[h!]
\centering
\includegraphics[scale=0.8]{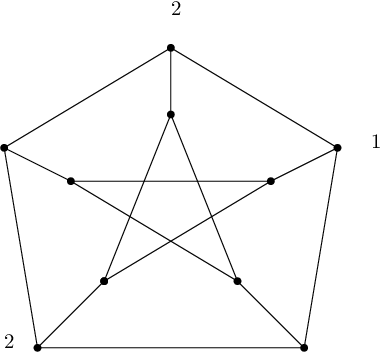}
\caption{The Petersen graph $P$ has $ \pi^{*}_{2}(P)=5 $.}\label{5}
\end{figure}
  \medskip

   \section{ $2$-restricted optimal pebbling number of trees }
   In this section, we obtain an upper bound for the $2$-restricted optimal pebbling number of trees.  As we stated before, Chellali, Haynes, Hedetniemi and Lewis proved that the $2$-restricted optimal pebbling number of trees of order $n$ is at most $\lceil\frac{5n}{7}\rceil$ (Theorem \ref{3}). To improve this upper bound, we need to  consider Roman domination number of a graph. 
   
    For a graph $ G = (V, E) $, let $ f:V\rightarrow \lbrace 0,1,2\rbrace $, and let $ (V_{0},V_{1},V_{2}) $ be the ordered partition of $ V $ induced by $ f $, where $ V_{i}=\lbrace v\in V \vert f(v)=i \rbrace $ and $ \vert V_{i}\vert=n_{i} $, for $ i = 0, 1, 2 $. Note that there exists a one to one correspondence between the functions 
    $f:V\rightarrow \lbrace 0,1,2\rbrace $ and the ordered partitions $ (V_{0},V_{1},V_{2}) $ of $ V $. Thus, we will write $ f=(V_{0},V_{1},V_{2}) $. 
        A function $ f=(V_{0},V_{1},V_{2}) $ is a Roman dominating function (RDF) if $ V_{2}\succ V_{0} $, where $ \succ $ means that the set $ V_{2} $ dominates the set $ V_{0} $, i.e. $ V_{0}\subseteq N[V_{2}] $. The weight of $ f $ is $ f(V)=\sum_{v\in V}f(v)=2n_{2}+n_{1} $.
        The Roman domination number, denoted $ \gamma_{R}(G) $, equals the minimum weight of an RDF of $ G $, and a function $ f=(V_{0},V_{1},V_{2}) $ is a $ \gamma_{R} $-function if it is an RDF and $ f(V)=\gamma_{R}(G) $.
                We assert $ \gamma_{R}(T_{n})\leqslant \gamma_{R}(P_{n}) $ for every tree $T_n$ of order $ n $.  
   We need the following results:

   \begin{lemma}{\rm\cite{Cockayne}}\label{3.1}
   	For any graph $ G $, $\gamma(G) \leqslant\gamma_{R}(G)\leqslant 2\gamma(G)$.
   \end{lemma}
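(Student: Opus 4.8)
The plan is to prove the two inequalities in Lemma \ref{3.1} separately, both by direct construction, since each direction simply relates an optimal Roman dominating function to an optimal dominating set (or vice versa).

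For the upper bound $\gamma_R(G)\le 2\gamma(G)$, I would start from a minimum dominating set $D$ of $G$, so $|D|=\gamma(G)$. The idea is to turn $D$ into a Roman dominating function by assigning the value $2$ to every vertex of $D$ and $0$ to every other vertex; that is, set $f=(V\setminus D,\ \emptyset,\ D)$. Since $D$ is a dominating set, every vertex outside $D$ has a neighbor in $D=V_2$, so $V_0\subseteq N[V_2]$ and $f$ is a valid RDF. Its weight is $2|D|=2\gamma(G)$, and since $\gamma_R(G)$ is the minimum weight over all RDFs, we get $\gamma_R(G)\le 2\gamma(G)$.

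For the lower bound $\gamma(G)\le\gamma_R(G)$, I would go the other way and start from a $\gamma_R$-function $f=(V_0,V_1,V_2)$ of minimum weight $\gamma_R(G)=2n_2+n_1$. The key observation is that the set $S=V_1\cup V_2$ is a dominating set of $G$: every vertex in $V_0$ is, by the RDF condition $V_0\subseteq N[V_2]$, adjacent to some vertex of $V_2\subseteq S$, while every vertex of $S$ trivially dominates itself. Hence $\gamma(G)\le|S|=n_1+n_2$. It then remains to note that $n_1+n_2\le 2n_2+n_1=\gamma_R(G)$, which holds because $n_2\ge 0$, giving $\gamma(G)\le\gamma_R(G)$.

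Neither direction presents a genuine obstacle; the whole lemma is essentially a bookkeeping argument translating between the combinatorial object (a dominating set) and the weighted object (an RDF). The only point to state carefully is the verification that $V_1\cup V_2$ dominates $G$ in the lower-bound direction, making sure that vertices placed in $V_1$ are counted as dominating themselves and that the RDF condition is exactly what guarantees the vertices of $V_0$ are covered. Since this is a known result cited from \cite{Cockayne}, I would keep the write-up brief and present both constructions in a couple of lines each.
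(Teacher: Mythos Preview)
Your proof is correct and is the standard argument for this well-known inequality. Note, however, that the paper does not actually supply its own proof of Lemma~\ref{3.1}; it merely quotes the result from \cite{Cockayne}, so there is no in-paper proof to compare against. Your write-up matches the original argument in \cite{Cockayne} essentially verbatim: construct an RDF from a minimum dominating set by assigning $2$'s to obtain $\gamma_R(G)\le 2\gamma(G)$, and observe that $V_1\cup V_2$ dominates $G$ with $|V_1\cup V_2|\le n_1+2n_2$ to obtain $\gamma(G)\le\gamma_R(G)$.
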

   
   \begin{lemma}{\rm\cite{Cockayne}}\label{3.2}
   	Let $ f=(V_{0},V_{1},V_{2}) $ be a $ \gamma_{R} $-function of an isolate-free graph $G$, such that $|V_1|=n_{1} $ is a minimum. Then
   	\begin{itemize}
   		\item[a)]
   		No edge of $ G $ joins $ V_{1} $ and $ V_{2} $. 
   		\item[b)]
   		$ V_{1}$ is independent, and $ V_{0}\cup V_{2} $ is a vertex cover.
   		\item[c)]
   		$ V_{0}\succ V_{1} $
   		\item[d)]
   		Each vertex of $ V_{0} $ is adjacent to at most one vertex of $ V_{1} $.
   		\item[e)]
   		$ V_{2} $ is a $ \gamma $-set of $ G[V_{0}\cup V_{2}] $.
   	\end{itemize}
   \end{lemma}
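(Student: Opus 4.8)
The plan is to establish all five items by one and the same device: assume the stated conclusion fails and exhibit a modified function $f'=(V_0',V_1',V_2')$ that is still an RDF of $G$ but contradicts the extremality of $f$ --- either $w(f')<w(f)=\gamma_R(G)$, contradicting that $f$ is a $\gamma_R$-function, or $w(f')=w(f)$ with $|V_1'|<|V_1|=n_1$, contradicting the minimality of $n_1$. Throughout I would use two trivial observations, stated once at the outset: a vertex carrying value $1$ or $2$ imposes no domination requirement, and enlarging $V_2$ never destroys the RDF property, so after a modification one only has to check that each newly created $0$-vertex lies in $N[V_2']$.

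For part~(a): if some $u\in V_1$ has a neighbour $v\in V_2$, move $u$ down to $V_0$; then $u\in N[v]\subseteq N[V_2]$, so $f'$ is still an RDF, while $w(f')=w(f)-1$, contradicting minimality of the weight. For part~(b): if $u,v\in V_1$ are adjacent, set $f'(u)=2$ and $f'(v)=0$; then $v\in N[u]\subseteq N[V_2']$ and every old $0$-vertex is still dominated by $V_2\subseteq V_2'$, so $f'$ is an RDF of the same weight ($+1$ on $u$, $-1$ on $v$) with $|V_1'|=n_1-2$, contradicting minimality of $n_1$; hence $V_1$ is independent, and therefore its complement $V_0\cup V_2$ is a vertex cover. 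Part~(c) then follows with no further modification: $G$ is isolate-free, so each $u\in V_1$ has a neighbour, which by part~(a) is not in $V_2$ and by part~(b) is not in $V_1$, hence lies in $V_0$. For part~(d): if $v\in V_0$ is adjacent to distinct $u_1,u_2\in V_1$, set $f'(v)=2$ and $f'(u_1)=f'(u_2)=0$; then $u_1,u_2\in N[v]\subseteq N[V_2']$, the weight is unchanged ($+2-1-1$), and $|V_1'|=n_1-2$, again contradicting minimality of $n_1$.

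For part~(e): the RDF condition $V_0\subseteq N[V_2]$ says precisely that $V_2$ dominates $V_0\cup V_2$ inside the induced subgraph $H:=G[V_0\cup V_2]$ (the relevant edges have both ends in $V_0\cup V_2$), so $V_2$ is a dominating set of $H$. If it were not a minimum one, pick a dominating set $D$ of $H$ with $|D|<|V_2|$; note $D\subseteq V_0\cup V_2$ is disjoint from $V_1$. Define $f'$ by $f'(v)=2$ for $v\in D$, $f'(v)=1$ for $v\in V_1$, and $f'(v)=0$ otherwise. Every $0$-vertex of $f'$ lies in $(V_0\cup V_2)\setminus D$ and is dominated by $D$ in $H$, hence in $G$, so $f'$ is an RDF, and $w(f')=2|D|+n_1<2|V_2|+n_1=\gamma_R(G)$, a contradiction. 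Thus $|V_2|=\gamma(H)$ and $V_2$ is a $\gamma$-set of $H$.

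The individual arguments are all short one-step exchanges, so there is no serious obstacle; the only place genuinely needing care is part~(e), where one must check that the replacement configuration $f'$ really is a valid RDF --- that $D$ does not accidentally meet $V_1$, that the $1$'s placed on $V_1$ carry no obligation, and that domination achieved inside the induced subgraph $H$ remains domination in $G$. The other mild bookkeeping point is to keep straight which minimality each case exploits: parts~(a) and~(e) contradict weight-minimality, while parts~(b) and~(d) crucially need that $n_1$ is minimum among all $\gamma_R$-functions.
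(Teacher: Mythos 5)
Your proof is correct: each of the five exchange arguments is valid, the weight accounting is right, and you correctly separate the cases that contradict weight-minimality (a, e) from those that need the minimality of $n_1$ (b, d), including the necessary checks in (e) that $D$ avoids $V_1$ and that domination in $G[V_0\cup V_2]$ persists in $G$. Note, however, that the paper itself gives no proof of this lemma --- it is quoted verbatim from the cited reference of Cockayne, Dreyer, Hedetniemi and Hedetniemi --- and your argument is essentially the same standard local-exchange proof found in that source, so there is no divergence to report.
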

   \begin{theorem}\label{3.3}
   	\begin{enumerate}
   	\item[(i)] 
   	{\rm\cite{Cockayne}}
   	For paths $ P_{n} $ of  order $ n\geqslant 3 $, $ \gamma_{R}(P_{n})=\lceil\dfrac{2n}{3}\rceil$.
   	
   	\item[(ii)]{\rm\cite{Chellali}}\label{3.4}
   	For paths $ P_{n} $ of order $ n\geqslant 3 $, $ \pi^{*}_{2}(P_{n})=\lceil\dfrac{2n}{3}\rceil$.
   	
   	\item[(iii)]
   	$ \pi^{*}_{2}(P_{n})=\gamma_{R}(P_{n})=\lceil\dfrac{2n}{3}\rceil$.
   	\end{enumerate} 
   \end{theorem}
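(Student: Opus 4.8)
The plan is short, because parts (i) and (ii) are quoted results: (i) is the value of $\gamma_R(P_n)$ established in \cite{Cockayne} and (ii) is the value of $\pi^*_2(P_n)$ established in \cite{Chellali}. Part (iii) is then immediate by transitivity, $\pi^*_2(P_n)=\lceil 2n/3\rceil=\gamma_R(P_n)$, so once (i) and (ii) are invoked nothing remains to be checked; I would simply record this chaining. If a self-contained argument were wanted, I would instead derive both equalities in parallel from one pair of two-sided estimates, as sketched below.

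For the common upper bound, write $P_n=v_1v_2\cdots v_n$ and split it into consecutive blocks of length $3$, with one trailing block of length $1$ or $2$ when $3\nmid n$. Assign weight $2$ to the middle vertex of every length-$3$ block, to one vertex of a trailing length-$2$ block, and to a trailing length-$1$ vertex; all remaining vertices get $0$. The total weight is exactly $\lceil 2n/3\rceil$. This assignment is an RDF, since every $0$-vertex is adjacent to the $2$-vertex of its own block, and it is a solvable $2$RPC, since two pebbles on a block centre reach either neighbour in one pebbling move while the short trailing block already carries a pebble. Hence $\gamma_R(P_n)\le\lceil 2n/3\rceil$ and $\pi^*_2(P_n)\le\lceil 2n/3\rceil$.

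For the lower bounds I would treat the two quantities separately. For $\gamma_R$: if $f=(V_0,V_1,V_2)$ is an RDF of $P_n$, then $V_0\subseteq N[V_2]$ and each vertex of a path dominates at most $3$ vertices, so $n\le 3n_2+n_1$; therefore $f(V)=2n_2+n_1=\tfrac23(3n_2+n_1)+\tfrac13 n_1\ge\tfrac23 n$, and integrality of $f(V)$ forces $\gamma_R(P_n)\ge\lceil 2n/3\rceil$. The matching bound $\pi^*_2(P_n)\ge\lceil 2n/3\rceil$ is the genuinely substantive ingredient and is exactly what \cite{Chellali} supplies, via an unsolvability/weight argument along the path; I would not reprove it here. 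Accordingly the only real obstacle lies not in (iii) — which is a one-line deduction — but in that cited lower bound, and everything else above is routine bookkeeping.
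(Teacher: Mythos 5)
Your main argument --- quoting (i) from \cite{Cockayne}, (ii) from \cite{Chellali}, and obtaining (iii) by transitivity --- is exactly the paper's treatment, since the theorem is stated there without proof as a collection of cited facts plus the trivial chaining. One small slip in your optional self-contained sketch: when $n\equiv 1\pmod 3$ the trailing singleton must receive weight $1$ rather than $2$ (otherwise the total is $\lceil 2n/3\rceil+1$); with that correction, and noting that a single pebble (or Roman weight $1$) on that vertex suffices both as an RDF and as a solvable $2$RPC, the sketch goes through.
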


    We need the following lemma: 
   \begin{lemma}{\rm\cite{Chellali}}\label{3.5}
   	For any graph $ G $, $ \pi^{*}_{2}(G)\leqslant \gamma_{R}(G) $.
   \end{lemma}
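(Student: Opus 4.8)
The plan is to exhibit, for an arbitrary graph $G$, a solvable $2$RPC whose weight equals $\gamma_R(G)$; since $\pi_2^*(G)$ is by definition the minimum weight over all solvable $2$RPCs of $G$, this immediately yields $\pi_2^*(G)\leqslant\gamma_R(G)$.

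First I would fix a $\gamma_R$-function $f=(V_0,V_1,V_2)$ of $G$, so that $f(V)=2n_2+n_1=\gamma_R(G)$ and $V_0\subseteq N[V_2]$. I then regard $f$ itself as a pebbling configuration, i.e. place $f(v)$ pebbles on each vertex $v$. Because $f(v)\in\{0,1,2\}$ for every $v\in V(G)$, this configuration satisfies $f(v)\leqslant 2$ and hence is a $2$RPC, and its weight is precisely $\sum_{v}f(v)=\gamma_R(G)$.

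Next I would verify solvability: for each target vertex $v$ we must produce some (possibly empty) sequence of pebbling moves ending with a pebble on $v$. If $v\in V_1\cup V_2$ then $f(v)\geqslant 1$, so $v$ already carries a pebble and the empty sequence suffices. If $v\in V_0$, then since $f$ is an RDF there is a neighbor $u\in V_2$ with $f(u)=2$; a single pebbling step from $u$ to $v$ removes two pebbles from $u$ and deposits one on $v$, so $v$ is covered. Thus every vertex is reachable, the configuration is solvable, and the bound follows.

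There is essentially no hard step here. The only points needing care are (a) that the upper bound $f(v)\leqslant 2$ in the definition of an RDF is exactly what makes $f$ a legal $2$RPC, so no rescaling is needed, and (b) that solvability only requires a separate move-sequence for each target, so we never need a single vertex of $V_2$ to simultaneously supply pebbles to all of its zero-weighted neighbours. If one instead demanded that all targets be pebbled at once, this argument would not go through, but that stronger notion is not the one in force.
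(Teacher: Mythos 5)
Your argument is correct and is exactly the standard one: the paper cites this lemma from Chellali et al.\ without reproving it, and the proof there is the same observation that a $\gamma_R$-function, viewed as a configuration, is a solvable $2$RPC (targets in $V_1\cup V_2$ need no move, targets in $V_0$ receive a pebble by one move from a $V_2$-neighbour carrying two pebbles). Nothing is missing.
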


   \begin{theorem}
    For any tree $ T $ of order $ n\geqslant 3 $, $ \gamma_{R}(T)\leqslant \gamma_{R}(P_{n}).$
   \end{theorem}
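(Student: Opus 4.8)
The plan is to use Theorem~\ref{3.3}(i), which reduces the claim to proving $\gamma_{R}(T)\le\lceil 2n/3\rceil$ for every tree $T$ of order $n$; I would prove this by strong induction on $n$, set up so that it also covers $n=1,2$ (where $\gamma_{R}=1,2$ and $\lceil 2n/3\rceil=1,2$) and with base cases $n\le 3$ checked by hand. For the inductive step with $n\ge 4$, if $T$ has maximum degree at most $2$ then $T=P_{n}$ and equality holds by Theorem~\ref{3.3}(i); so assume $T$ has a vertex of degree at least $3$. The guiding principle is that it suffices to produce a set $X$ of three vertices whose removal leaves a tree $T'=T-X$ (equivalently, $X$ spans a subtree of $T$ that is pendant at a single vertex) with $\gamma_{R}(T)\le\gamma_{R}(T')+2$; the induction hypothesis applied to $T'$ then gives
\[
\gamma_{R}(T)\le\gamma_{R}(T')+2\le\Big\lceil\tfrac{2(n-3)}{3}\Big\rceil+2=\Big\lceil\tfrac{2n}{3}\Big\rceil .
\]
In every case below, the bound $\gamma_{R}(T)\le\gamma_{R}(T')+2$ is obtained by taking a $\gamma_{R}$-function $g$ of $T'$, putting $0$ on the vertices of $X$, and raising one suitably chosen vertex to value $2$.

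I would search for such an $X$ in this order. (1) If $T$ contains a path $v_{1}v_{2}v_{3}v_{4}$ with $v_{1}$ a leaf and $\deg(v_{2})=\deg(v_{3})=2$, take $X=\{v_{1},v_{2},v_{3}\}$ and raise $v_{2}$ to $2$, which Roman-dominates $v_{1},v_{2},v_{3}$ (and $v_{4}$). (2) If some vertex $v$ has at least three leaf neighbours, let $X$ consist of three of them and raise $v$ to $2$. (3) If some vertex $v$ has exactly two leaf neighbours $\ell_{1},\ell_{2}$ — and hence, since $T\neq P_{3}$, also a non-leaf neighbour $u$ — take $X=\{v,\ell_{1},\ell_{2}\}$, a subtree pendant at $u$, and raise $v$ to $2$, which dominates $\ell_{1},\ell_{2},v$ (and $u$). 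In each of (1)--(3), $T'$ is a tree on $n-3$ vertices and the step closes.

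It remains to handle trees in which none of (1)--(3) applies: $T$ has no path $v_{1}v_{2}v_{3}v_{4}$ as in (1), and every vertex adjacent to a leaf has exactly one leaf neighbour. This residual analysis is where I expect the real work to lie. Fixing a longest path $x_{0}x_{1}\cdots x_{k}$ with $x_{0}$ a leaf, the longest-path argument together with these assumptions forces $\deg(x_{1})=2$, $\deg(x_{2})\ge 3$, and every branch at $x_{2}$ other than the one containing $x_{3}$ to be a pendant edge or a pendant $P_{2}$. I would then iterate a three-vertex reduction anchored at $x_{2}$ — for instance, if $z$ is a leaf adjacent to $x_{2}$, prune $X=\{x_{0},x_{1},z\}$ — where in the delicate sub-cases one must choose $g$ with care (so that $g(x_{2})=2$, or with $|V_{1}|$ minimum so that the structural conclusions of Lemma~\ref{3.2} are available). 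The main obstacle is precisely this bookkeeping: showing case by case that three vertices can always be deleted at a cost of at most $2$ in weight, so that several short pendant branches meeting at one internal vertex never combine to force an extra unit.
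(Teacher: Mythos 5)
Your reduction via Theorem~\ref{3.3}(i) and your pruning steps (1)--(3) are fine as far as they go, but the residual case you defer as ``bookkeeping'' is not bookkeeping: it is exactly where the claim breaks down, and no three-vertex prune at cost at most $2$ can close it. Concretely, let $T$ be the tree of order $n=10$ obtained from two disjoint copies of $P_5$, say $a_1a_2a_3a_4a_5$ and $b_1b_2b_3b_4b_5$, by adding the edge $a_3b_3$. None of your cases (1)--(3) applies to $T$ (no leaf has a stem followed by a second degree-$2$ vertex, and no vertex has two leaf neighbours), so $T$ lies in your residual case; and $\gamma_R(T)=8$ while $\lceil 2n/3\rceil=\gamma_R(P_{10})=7$. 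Indeed, placing $2$ on $a_3$ and $b_3$ and $1$ on each of the four leaves gives weight $8$; conversely, for any Roman dominating function each pendant pair $\{a_1,a_2\},\{a_4,a_5\},\{b_1,b_2\},\{b_4,b_5\}$ must carry weight at least $1$, and weight exactly $1$ on such a pair forces the adjacent vertex $a_3$ (resp.\ $b_3$) to have value $2$; checking the four possibilities for the values of $a_3,b_3$ yields weight at least $8$ in every case. Since your inductive scheme, if completed, would prove $\gamma_R(T)\le\gamma_R(T')+2\le 5+2=7$, the step you leave open cannot be carried out for trees in which two adjacent vertices each support two short pendant branches; the statement you reduced to, $\gamma_R(T)\le\lceil 2n/3\rceil$ for all trees, is simply false.

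For comparison, the paper's own argument is different from yours (it splits on $n\bmod 3$ and uses $\gamma_R(T)\le 2\gamma(T)$ from Lemma~\ref{3.1} together with the inequality $\gamma(T)\le\gamma(P_n)$), but it founders on the same rock: $\gamma(T)\le\gamma(P_n)$ is also false in general (the corona $P_3\circ K_1$ has $\gamma=3>2=\gamma(P_6)$, and the $9$-vertex tree obtained from the example above by deleting the leaf $b_5$ has $\gamma=4>3=\gamma(P_9)$ and $\gamma_R=7>6=\lceil 2\cdot 9/3\rceil$). So the obstacle you ran into is not an artifact of your strategy: the theorem as stated admits counterexamples, and neither your outline nor the paper's proof can be repaired without additional hypotheses excluding such double-spider trees.
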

   \begin{proof}
   	Let $ T_{n} $ be a tree of order $ n $ with $\gamma_{R} $-function $f=(V_{0},V_{1},V_{2}) $ such that $ n_{1}=|V_1| $ is minimum. We consider two following cases:
   	\begin{itemize}
   		\item
   		If $ n\equiv 0 $ or $ 2 $ $(mod\, 3)$, then $ 2\gamma(P_{n})=\gamma_{R}(P_{n}) $. Since  $ \gamma(T_{n})\leqslant\gamma(P_{n}) $  by considering Lemma \ref{3.1}, $ \gamma_{R}(T_{n})\leqslant 2\gamma(T_{n})\leqslant 2\gamma(P_{n})=\gamma_{R}(P_{n}) $. 
   		\item
   		If $ n\equiv 1 $ $(mod\, 3)$, then by Lemma \ref{3.2}, each leaf $ v\in V(T_{n}) $ belongs to $ V_{0}\cup V_{1} $ and so $ f(v)\leqslant 1 $. By removing $ v $, we have $ \gamma_{R}(T_{n})\leqslant\gamma_{R}(T_{n-1})+1 $. Therefore
   		   			$$
   			 \gamma_{R}(T_{n})\leqslant\lceil\frac{2(n-1)}{3}\rceil+1=\lceil\frac{2n}{3}\rceil=\gamma_{R}(P_{n}).
   			$$ \qed  
   		   	\end{itemize}
   \end{proof}
   \begin{corollary}\label{T_n}
   	If $T_{n}$ is a tree of order $ n\geqslant 3 $, then $ \pi_{2}^{*}(T_{n})\leqslant \lceil \frac{2n}{3} \rceil $.
   \end{corollary}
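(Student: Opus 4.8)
The plan is to obtain the corollary by simply chaining together three results already available in the excerpt, so the real content lies upstream in the theorem $\gamma_R(T)\leqslant\gamma_R(P_n)$ that was just established. First I would invoke Lemma \ref{3.5}, which gives $\pi_2^*(T_n)\leqslant\gamma_R(T_n)$ for any graph, in particular for the tree $T_n$. Next I would apply the preceding theorem to bound $\gamma_R(T_n)\leqslant\gamma_R(P_n)$, using the hypothesis $n\geqslant 3$ so that the path $P_n$ is nontrivial and Theorem \ref{3.3} applies to it. Finally I would substitute the exact value $\gamma_R(P_n)=\lceil\frac{2n}{3}\rceil$ from Theorem \ref{3.3}(i) (equivalently (iii)).

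Putting these together yields
$$\pi_2^*(T_n)\leqslant\gamma_R(T_n)\leqslant\gamma_R(P_n)=\Big\lceil\frac{2n}{3}\Big\rceil,$$
which is exactly the claimed bound. No case analysis or further argument is needed at this stage.

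The step I expect to carry the actual difficulty is not in this corollary at all but in the inequality $\gamma_R(T)\leqslant\gamma_R(P_n)$ on which it rests; there the delicate point was the case $n\equiv 1\pmod 3$, handled by a leaf-removal induction using the structural properties of a minimum-$n_1$ $\gamma_R$-function from Lemma \ref{3.2}. Given that theorem and Lemma \ref{3.5}, the corollary is immediate, so in the write-up I would keep the proof to the two or three line chain of inequalities above and, if desired, remark that this improves the bound $\lceil\frac{5n}{7}\rceil$ of Theorem \ref{3} since $\frac{2}{3}<\frac{5}{7}$.
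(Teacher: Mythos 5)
Your proposal is correct and follows essentially the same route as the paper: the paper also deduces the corollary by combining Lemma \ref{3.5} with the preceding theorem $\gamma_{R}(T_{n})\leqslant\gamma_{R}(P_{n})=\lceil\frac{2n}{3}\rceil$. Nothing further is needed.
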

   \begin{proof}
   	By Lemma \ref{3.5}, $ \pi^{*}_{2}(T_{n})\leqslant \gamma_{R}(T_{n}) $, so we have $ \pi_{2}^{*}(T_{n})\leqslant \lceil \frac{2n}{3} \rceil $. \qed
   \end{proof}
   
   \begin{remark}
   The bound in Corollary \ref{T_n} is sharp for paths.
   \end{remark}

   \begin{corollary}\label{last}
   If $ G $ is a connected graph of order $ n\geqslant 3 $, then $ \pi_{2}^{*}(G)\leqslant \lceil \frac{2n}{3} \rceil $.
   \end{corollary}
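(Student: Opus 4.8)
The plan is to reduce the statement to the tree case already proved in Corollary \ref{T_n}. The single ingredient needed is the monotonicity of $\pi^{*}_{2}$ under passing to spanning subgraphs: if $H$ is a spanning subgraph of $G$ and $f$ is a solvable $2$RPC on $H$, then since $E(H)\subseteq E(G)$ every pebbling move that is legal on $H$ is also legal on $G$; moreover $f$ is still a $2$RPC on $G$ because $V(H)=V(G)$. Hence $f$ is a solvable $2$RPC on $G$, and taking $f$ of minimum weight gives $\pi^{*}_{2}(G)\leqslant \pi^{*}_{2}(H)$. This fact is in fact already used implicitly in the proof of the $\pi^{*}_{2}(G)=5$ characterization, so it is safe to invoke.

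Given this, the argument is immediate. Since $G$ is connected of order $n\geqslant 3$, it has a spanning tree $T$, which is a tree of order $n\geqslant 3$. Applying the monotonicity with $H=T$ yields $\pi^{*}_{2}(G)\leqslant \pi^{*}_{2}(T)$, and Corollary \ref{T_n} gives $\pi^{*}_{2}(T)\leqslant \lceil \tfrac{2n}{3}\rceil$. Chaining the two inequalities proves the corollary.

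There is essentially no obstacle here; the only point one should state with a little care is why a solvable restricted configuration on a spanning subgraph remains solvable (and restricted) on the whole graph, and this is exactly the observation above. As an alternative route one could avoid trees altogether and combine Lemma \ref{3.5} with the preceding theorem, writing $\pi^{*}_{2}(G)\leqslant \gamma_{R}(G)$ — but $\gamma_{R}$ is not obviously monotone under adding edges, so this would first require bounding $\gamma_{R}(G)$ by $\gamma_{R}$ of a spanning tree, which is less direct than the spanning-tree argument above; I would therefore present the spanning-tree proof.
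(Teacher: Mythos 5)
Your proof is correct and matches the paper's intended argument: the paper derives Corollary \ref{last} exactly by taking a spanning tree $T$ of $G$, using $\pi^{*}_{2}(G)\leqslant \pi^{*}_{2}(T)$ (the spanning-subgraph monotonicity you spell out, which the paper also invokes in its proof of the $\pi^{*}_{2}(G)=5$ characterization), and then applying Corollary \ref{T_n}. Your explicit justification of why a solvable $2$RPC on a spanning subgraph remains a solvable $2$RPC on $G$ is a welcome detail the paper leaves implicit.
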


 Corollary \ref{last} asserts that an upper bound for the $ 2 $-restricted optimal pebbling number of any connected graph $ G $ is the $ 2 $-restricted optimal pebbling number of any spanning tree $ T $ of $ G $.

   \section{$2$-restricted optimal pebbling number of grid graphs}
   In this section, we study $2$-restricted optimal pebbling number of some grid graphs. 
    If a vertex $v$ is reachable under some configuration $f$, we say that $ f $ covers $ v $ or $ v $ is covered by $ f $. The set of all covered vertices by $ f $ is called the coverage of $ f $. The size of a configuration $ f $, expressed as $ \vert f\vert $, is the total number of pebbles used in $ f $, while the size of the coverage by a configuration $ f $, expressed as $ Cov(f) $, is the total number of vertices covered by $ f $.

   A unit is the coverage obtained when $f$ begins as pebbles on a single vertex, called the source. A block is a combination of several units such that the subgraph induced by all reachable edges is connected. We need the following theorem: 
 
 \begin{theorem} {\rm\cite{Xue}} \label{Xue}
 	An optimal configuration $ f $ with $ n $ pebbles on grid graphs $ P_{n}\square P_{2} $ ($ n\neq 2, 5 $) consists of the following two types of fundamental blocks, 2-2 blocks and 2-1 blocks (see Figure \ref{block}).
  
   \begin{figure}[h!]
\centering
\includegraphics[scale=0.8]{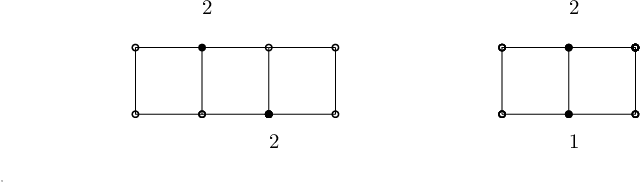}
\caption{A 2-2 block (left) and a 2-1 block (right).}\label{block}
\end{figure}
\end{theorem}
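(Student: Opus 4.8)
Here I sketch how I would prove the quoted structural result. Picture the grid $P_{n}\square P_{2}$ as $n$ consecutive \emph{columns}, each a copy of $P_{2}$; the two advertised gadgets are precisely the configurations of optimal density. A $2$-$1$ block occupies three columns and places $2$ pebbles on one of its two degree-$3$ vertices and $1$ pebble on the other, and one checks directly that it covers all $6$ of its vertices; a $2$-$2$ block occupies four columns and places $2$ pebbles on each of two suitably chosen vertices, and again one checks it covers all $8$. In both cases the block spends exactly one pebble per column it occupies, so it realizes the ratio of $2$ covered vertices per pebble, and the theorem amounts to the two assertions that $\pi^{*}(P_{n}\square P_{2})=n$ and that every $\pi^{*}$-configuration is a column-disjoint union of such blocks.

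The plan is to dispatch the upper bound first. The numerical semigroup generated by $3$ and $4$ has Frobenius number $3\cdot4-3-4=5$, so every $n\ge 3$ with $n\neq 5$ can be written as $n=3b+4a$ with $a,b\ge 0$; cutting the $n$ columns into $b$ consecutive triples and $a$ consecutive quadruples and installing a $2$-$1$ block on each triple and a $2$-$2$ block on each quadruple produces a solvable configuration of weight $3b+4a=n$, already of the claimed shape. This also explains the exclusions: $n=5$ is the one value $\ge 3$ not of the form $3b+4a$, so no such tiling exists, while $n=2$ is too short to contain either gadget (and $C_{4}$ is solvable with $2$ pebbles on a single vertex, a configuration that is neither a $2$-$1$ nor a $2$-$2$ block).

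The core is the lower bound $\pi^{*}(P_{n}\square P_{2})\ge n$, for which I would prove the inequality $Cov(f)\le 2\lvert f\rvert$ for \emph{every} configuration $f$ on the strip; applied to a solvable $f$, where $Cov(f)=2n$, this yields $\lvert f\rvert\ge n$ at once. For this one argues through units and blocks: a single source carrying $p$ pebbles covers precisely the vertices within distance $\lfloor\log_{2}p\rfloor$ of it, and in the width-$2$ strip there are at most $2p$ such vertices. The subtle point is that a block is genuinely \emph{super}additive over its units — the lone pebble of a $2$-$1$ block is wasteful in isolation but pays off jointly — so one needs a short local case analysis bounding how far a block's coverage can exceed the sum of its units' coverages, with the width-$2$ constraint of the strip doing the work. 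I expect this step, together with pinning down its equality cases, to be the main obstacle.

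Finally, the structural statement follows by tightness. If $f$ is a $\pi^{*}$-configuration then $\lvert f\rvert=n$ forces $Cov(f)=2\lvert f\rvert$, i.e.\ equality throughout the coverage bound: every block must cover a contiguous run of columns completely at density exactly one pebble per column, and the equality case of the block analysis then forces each block, up to the obvious symmetries, to be a $2$-$1$ block on three columns or a $2$-$2$ block on four columns, so $f$ is a column-disjoint union of such blocks. The delicate points that remain are exactly (a) the equality case of $Cov(f)\le 2\lvert f\rvert$ for a single block, ruling out every other density-one gadget inside a width-$2$ strip, and (b) the bookkeeping for pebbles sitting on a column shared by two blocks, so that the global count comes out to be precisely $n$; both are finite checks, and they are precisely what singles out $n=2$ and $n=5$ as the genuine exceptions.
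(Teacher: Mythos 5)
First, note that the paper does not prove this statement at all: it is quoted verbatim from Xue and Yerger \cite{Xue}, and the present authors only use it (together with $\pi^{*}\le\pi^{*}_{2}$) to deduce $\pi^{*}_{2}(P_{n}\square P_{2})=n$. So there is no in-paper proof to compare against; your outline should be judged against the covering-ratio machinery of the cited source, which is also the technique the paper imitates in Section 4 (units, blocks, and the induction of Lemma \ref{5.4}) for $P_{n}\square P_{4}$. Your overall plan --- upper bound by tiling the columns with $2$-$1$ and $2$-$2$ blocks via $n=3b+4a$, lower bound via a coverage-per-pebble bound $Cov(f)\le 2\lvert f\rvert$, structure from the equality case --- is indeed the right shape and matches that method.

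The genuine gap is that the equality-case analysis, which is the actual content of the theorem, is asserted rather than proved, and the intermediate claim you hang it on is false as stated. You claim that tightness in $Cov(f)\le 2\lvert f\rvert$ forces every block to cover a contiguous run of columns \emph{completely} at one pebble per column, after which only the two advertised gadgets survive a finite check. But consider two sources of $2$ pebbles each placed at $(2,1)$ and $(4,1)$ (same row, distance two apart): their units share the covered vertex $(3,1)$, so they form a single block; the interaction also reaches $(3,2)$, so the block covers $8$ vertices with $4$ pebbles --- ratio exactly $2$ --- yet it leaves $(1,2)$ and $(5,2)$ uncovered inside its column span. So ratio-$2$ blocks other than $2$-$1$ and $2$-$2$ blocks exist, and excluding them from an optimal \emph{solvable} configuration is not a local finite check: one must argue globally that the holes such a block leaves can only be covered by other blocks at the cost of interaction (merging the blocks) or of extra pebbles, which is precisely the delicate superadditivity bookkeeping you deferred. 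Until that step is carried out, the structural conclusion (and hence the theorem) is not established; the same analysis is also what is needed to certify $n=5$ as a true exception, since $Cov(f)\le 2\lvert f\rvert$ alone only gives $\lvert f\rvert\ge 5$ there. A minor but real error: $C_{4}$ is \emph{not} solvable with $2$ pebbles on a single vertex (the antipodal vertex needs $4$); in fact $\pi^{*}(P_{2}\square P_{2})=3$, which is the correct reason $n=2$ is excluded.
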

 
 \begin{corollary}
For $ n\neq 2, 5 $,  $ \pi^{*}_{2}(P_{n}\square P_{2})=n $.
 \end{corollary}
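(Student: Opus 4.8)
The plan is to derive the two inequalities $\pi^{*}_{2}(P_{n}\square P_{2})\le n$ and $\pi^{*}_{2}(P_{n}\square P_{2})\ge n$ separately, leaning on Theorem~\ref{Xue} for the content. For the lower bound, a solvable $2$RPC is in particular a solvable configuration, so $\pi^{*}_{2}(P_{n}\square P_{2})\ge \pi^{*}(P_{n}\square P_{2})$; and by Theorem~\ref{Xue} an optimal pebbling configuration of $P_{n}\square P_{2}$ (for $n\ne 2,5$) uses exactly $n$ pebbles, i.e.\ $\pi^{*}(P_{n}\square P_{2})=n$, whence $\pi^{*}_{2}(P_{n}\square P_{2})\ge n$.

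For the upper bound I would exhibit a solvable $2$RPC of weight $n$, namely the optimal configuration $f$ furnished by Theorem~\ref{Xue}. By that theorem $f$ is a union of $2$-$2$ blocks and $2$-$1$ blocks (Figure~\ref{block}); concretely this means cutting the $2\times n$ ladder into consecutive strips of three columns, each carrying a $2$-$1$ block (two pebbles on one vertex of the strip and one pebble on a suitable neighbour), and of four columns, each carrying a $2$-$2$ block (two pebbles on each of two suitably chosen vertices of the strip). Such a partition into $3$-column and $4$-column strips exists precisely when $n=3a+4b$ for some integers $a,b\ge 0$, i.e.\ for every $n\ge 3$ with $n\ne 5$ (and $n=2$ is excluded as well), which is exactly the hypothesis. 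One checks that each $2$-$1$ block covers its $2\times 3$ strip and each $2$-$2$ block covers its $2\times 4$ strip, and that each block uses as many pebbles as its strip has columns; hence $w(f)=n$. Since in both block types no vertex receives more than two pebbles and the strips are vertex-disjoint, $f$ is indeed a $2$RPC, so $\pi^{*}_{2}(P_{n}\square P_{2})\le w(f)=n$. Together with the lower bound this gives $\pi^{*}_{2}(P_{n}\square P_{2})=n$.

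The real work — that fewer than $n$ pebbles cannot cover $P_{n}\square P_{2}$ and that an optimal configuration decomposes into these two fundamental block types — is carried by Theorem~\ref{Xue}, so the corollary is mostly bookkeeping once that is in hand. The points needing care are: (i) checking the coverage claims for a single $2$-$1$ block and a single $2$-$2$ block, including the degree-two corner vertices at the two ends of the ladder; and (ii) noting that since the strips are vertex-disjoint no vertex accumulates pebbles from two different blocks, so the resulting configuration really satisfies $f(v)\le 2$ everywhere.
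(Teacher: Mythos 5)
Your proposal is correct and follows essentially the same route as the paper: the lower bound comes from $\pi^{*}(G)\leqslant\pi^{*}_{2}(G)$ together with $\pi^{*}(P_{n}\square P_{2})=n$, and the upper bound from observing that the optimal configuration of Theorem~\ref{Xue}, built from 2-2 and 2-1 blocks, places at most two pebbles on any vertex and hence is already a solvable $2$RPC of weight $n$. Your extra bookkeeping about the $3$- and $4$-column strips and the condition $n=3a+4b$ just makes explicit what the paper leaves implicit in citing Theorem~\ref{Xue}.
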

\begin{proof}
Since for any graph $ G $, $ \pi^{*}(G)\leqslant\pi^{*}_{2}(G) $, so $ n\leqslant\pi^{*}_{2}(P_{n}\square P_{2}) $ for $ n\neq 2, 5 $. By Theorem \ref{Xue}, $ \pi^{*}_{2}(P_{n}\square P_{2})\leqslant n $ for $ n\neq 2, 5 $. Therefore the result follows. \qed
\end{proof}

It is easy to conclude that $ \pi^{*}_{2}(P_{2}\square P_{2})=3 $ and $ \pi^{*}_{2}(P_{5}\square P_{2})=6 $
(see Figure \ref{P*2}).
   \begin{figure}[h!]
\centering
\includegraphics[scale=0.8]{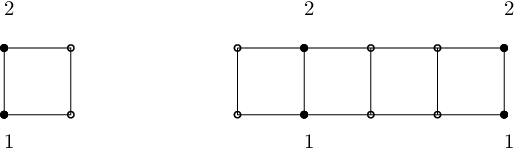}
\caption{2-restricted optimal pebbling configuration of $ P_{2}\square P_{2} $ (left) and $ P_{5}\square P_{2} $ (right).}\label{P*2}
\end{figure}

\begin{theorem} {\rm \cite{Xue}}\label{P3}
For $ n\geqslant 2 $, $ \pi^{*}(P_{n}\square P_{3})=n+1 $.
\end{theorem}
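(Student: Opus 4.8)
The plan is to establish the two bounds $\pi^{*}(P_n\square P_3)\le n+1$ and $\pi^{*}(P_n\square P_3)\ge n+1$ separately. Label the vertices $(r,c)$ with $r\in\{1,2,3\}$ the row and $c\in\{1,\dots,n\}$ the column. For the upper bound I would exhibit an explicit solvable configuration of weight $n+1$: for $n\ge 4$ put four pebbles on $(2,2)$, one pebble on each of $(2,4),(2,5),\dots,(2,n)$, and nothing elsewhere, a configuration of weight $4+(n-3)=n+1$; the cases $n=2$ (two pebbles on $(2,1)$ and one on $(2,2)$) and $n=3$ (four pebbles on $(2,2)$) are checked directly. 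To verify solvability of the $n\ge 4$ configuration one checks two kinds of targets. Every vertex in columns $1,2,3$ lies within distance $2$ of $(2,2)$, and from four pebbles on one vertex one can move a pebble to any vertex at distance at most $2$. For a vertex $(1,c)$ or $(3,c)$ with $c\ge 4$, funnel the pile along the middle row: two pebbles reach $(2,3)$, one of which moves to $(2,4)$ and, together with the pebble already placed there, yields two pebbles on $(2,4)$; repeating, one obtains two pebbles on $(2,c')$ for every $4\le c'\le c$, and the last step moves a pebble onto $(1,c)$ or $(3,c)$. Since $(2,c)$ itself already carries a pebble, the configuration is solvable.

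For the lower bound the main idea is contraction. Identifying, for each column $c$, the vertices $(2,c)$ and $(3,c)$ turns $P_n\square P_3$ into a graph that is, up to a loop and a parallel edge per column (neither of which is ever useful in pebbling), the grid $P_n\square P_2$; moreover a solving sequence in $P_n\square P_3$ projects, after discarding the moves that become loops, to a solving sequence in this quotient, so the push-forward $\bar f$ of a solvable configuration $f$ is a solvable configuration of the same weight on $P_n\square P_2$. Hence, using $\pi^{*}(P_n\square P_2)=n$ for $n\notin\{2,5\}$ (Theorem~\ref{Xue} and the surrounding discussion), we obtain $w(f)=w(\bar f)\ge n$; the two exceptional values $n\in\{2,5\}$ are handled directly.

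It remains to exclude $w(f)=n$, and this is where the real work lies. If $w(f)=n$ then $\bar f$ is an \emph{optimal} configuration of $P_n\square P_2$, hence by Theorem~\ref{Xue} is built from the 2-2 and 2-1 fundamental blocks; I would then argue that no such configuration can be the push-forward of a configuration solvable on $P_n\square P_3$, the point being that the blocks of an optimal $P_n\square P_2$-configuration use their pebbles too tightly to leave any spare reach for a third row, so some vertex of row~$3$ stays uncovered, a contradiction. (A self-contained alternative is induction on $n$ in steps of three: show that the last three columns of a solvable configuration carry at least three pebbles and that deleting them leaves a configuration solvable on $P_{n-3}\square P_3$, whence $w(f)\ge 3+\pi^{*}(P_{n-3}\square P_3)=n+1$.) The obstacle here is intrinsic: the weight-function inequality $\sum_{u}f(u)2^{-d(u,v)}\ge 1$ only yields $w(f)\ge n/3$, so upgrading the clean bound $w(f)\ge n$ to $w(f)\ge n+1$ requires genuinely structural information about optimal configurations, whether the block characterization above or a careful boundary analysis.
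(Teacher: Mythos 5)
The paper itself offers no proof of this statement—it is quoted from Xue and Yerger \cite{Xue}—so your attempt has to stand on its own. Your upper bound does: the configuration with four pebbles on $(2,2)$ and one pebble on each of $(2,4),\dots,(2,n)$ has weight $n+1$, the funnelling argument along row $2$ correctly verifies solvability, and the small cases $n=2,3$ check out. The quotient argument is also sound as far as it goes: identifying rows $2$ and $3$ columnwise is a homomorphic collapse under which a solvable configuration pushes forward to a solvable configuration of the same weight on $P_n\square P_2$, giving $\pi^{*}(P_n\square P_3)\ge \pi^{*}(P_n\square P_2)=n$ for $n\neq 2,5$.

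The gap is the step from $\ge n$ to $\ge n+1$, which you yourself flag as ``where the real work lies'' and then do not carry out. Asserting that the 2-2/2-1 blocks of an optimal $P_n\square P_2$-configuration ``use their pebbles too tightly to leave any spare reach for a third row'' is not an argument: the push-forward only constrains the column distribution of pebbles, and a merged column holding two pebbles can lift either to two pebbles on one vertex (which can move) or to one pebble on each of $(2,c)$ and $(3,c)$ (which cannot), so excluding every weight-$n$ solvable configuration on the three-row grid requires a genuine structural case analysis that is not supplied. The alternative induction sketch is equally unproven: it is not clear that the last three columns must carry at least three pebbles, nor that deleting them leaves a configuration solvable on $P_{n-3}\square P_3$, since pebbles near the interface can serve targets on both sides and targets near column $n-3$ may rely on pebbles in the deleted columns. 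This boundary/interaction control is precisely what Xue and Yerger's covering-ratio and block machinery (echoed in Section 4 of this paper) is built to handle; without it, the lower bound $\pi^{*}(P_n\square P_3)\ge n+1$ is missing and the proof is incomplete.
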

  \begin{figure}[h!]
\centering
\includegraphics[scale=0.8]{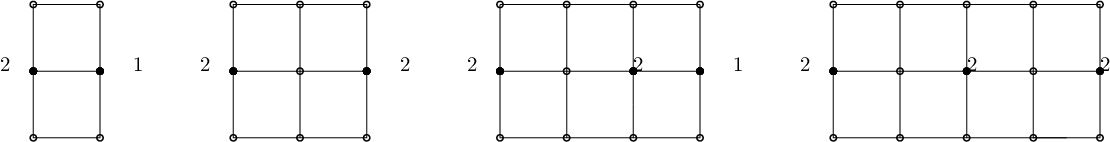}
\caption{2-restricted optimal pebbling configuration of $ P_{n}\square P_{3} $, for $ n=2, 3, 4, 5 $.}\label{8}
\end{figure}

\begin{corollary}
For $ n\geqslant 2 $, $ \pi^{*}_{2}(P_{n}\square P_{3})=n+1 $.
\end{corollary}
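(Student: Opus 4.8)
The plan is to prove the two bounds $\pi^{*}_{2}(P_{n}\square P_{3})\geqslant n+1$ and $\pi^{*}_{2}(P_{n}\square P_{3})\leqslant n+1$ separately. For the lower bound I would use, exactly as in the proof of the previous corollary, that $\pi^{*}(G)\leqslant\pi^{*}_{2}(G)$ holds for every graph $G$, since every solvable $2$RPC is in particular a solvable configuration. Together with Theorem \ref{P3}, which gives $\pi^{*}(P_{n}\square P_{3})=n+1$, this immediately yields $\pi^{*}_{2}(P_{n}\square P_{3})\geqslant n+1$ for all $n\geqslant 2$.

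For the upper bound I would exhibit an explicit solvable $2$RPC of weight $n+1$. Denote the vertices of $P_{n}\square P_{3}$ by $(i,j)$ with $1\leqslant i\leqslant n$ (the column) and $1\leqslant j\leqslant 3$ (the row), so that $(i,2)$ is the middle vertex of column $i$. Put two pebbles on $(i,2)$ for every odd $i$ with $1\leqslant i\leqslant n$, and, when $n$ is even, additionally put one pebble on $(n,2)$. This is a $2$RPC in both parities, and its weight is $2\cdot\frac{n+1}{2}=n+1$ when $n$ is odd and $2\cdot\frac{n}{2}+1=n+1$ when $n$ is even. For $n=2,3,4,5$ this is precisely the configuration drawn in Figure \ref{8}.

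It then remains to verify solvability, which I would check column by column. Each odd column has its middle vertex loaded with two pebbles, so its three vertices $(i,1),(i,2),(i,3)$ are immediately reachable. For an even column $2k$ with $2k<n$, the middle vertex $(2k,2)$ receives a pebble from either neighbouring loaded column, while $(2k,1)$ and $(2k,3)$ are reached by firing one pebble from $(2k-1,2)$ and one from $(2k+1,2)$ onto $(2k,2)$, giving two pebbles there, and then firing once more. The only column needing separate treatment is column $n$ when $n$ is even: there $(n,2)$ already carries a pebble, a single pebble fired from $(n-1,2)$ provides the second, and then $(n,1)$ and $(n,3)$ follow. Hence every vertex is covered, the configuration is solvable, and $\pi^{*}_{2}(P_{n}\square P_{3})\leqslant n+1$.

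I do not expect a genuine obstacle here; the only delicate points are the bookkeeping at the boundary columns (column $1$, and column $n$ in the even case), where the symmetric ``push from both neighbours'' step must be replaced by a one-sided argument, and checking that the single extra pebble in the even case is placed so that it actually completes the last column rather than being wasted. Combining the two bounds gives $\pi^{*}_{2}(P_{n}\square P_{3})=n+1$, as claimed.
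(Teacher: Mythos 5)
Your proposal is correct and follows essentially the same route as the paper: the lower bound comes from $\pi^{*}(G)\leqslant\pi^{*}_{2}(G)$ together with Theorem \ref{P3}, and the upper bound from an explicit weight-$(n+1)$ $2$RPC, which is precisely the pattern the paper indicates via Figure \ref{8}. Your column-by-column solvability check just spells out in detail what the paper leaves to the reader.
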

\begin{proof}
According to the pebbling configuration pattern given in Figure \ref{8}, we have $ \pi^{*}_{2}(P_{n}\square P_{3})\leqslant n+1 $. Therefore we have the result by Theorem \ref{P3}.
\qed 
\end{proof}

In the rest of this section, we study the covering ratio of a block. First we recall the definition. 
\begin{definition}{\rm \cite{Xue}}
The covering ratio of a configuration $ f $ is given by $ \dfrac{Cov(f)}{\vert f\vert} $, which is the ratio between the size of the coverage and the size of the configuration.
\end{definition}
\begin{remark}\label{5.2}
If $ f $ is a solvable $ 2 $RPC on $G$, then there are two types unit with covering ratio $ 1 $ or $ 2.5 $.
\end{remark}
\begin{figure}[ht]
\centering
\includegraphics[scale=0.8]{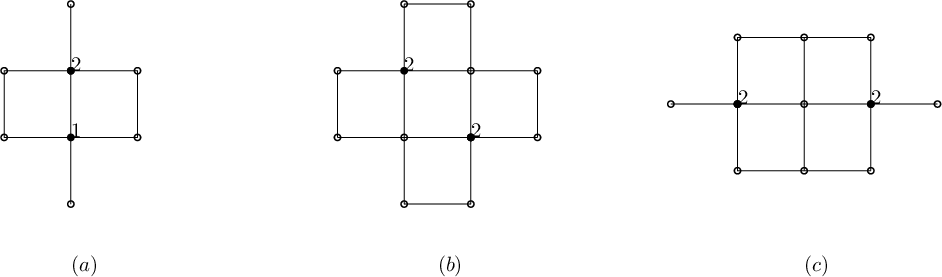}
\caption{The block with two units.}\label{9}
\end{figure}

\begin{lemma}\label{5.3}
	In a $ 2 $RPC, the covering ratio of a block which consists of two units is at most $ 3 $.
\end{lemma}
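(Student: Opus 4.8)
The plan is to analyze what a block consisting of two units can look like in a $2$RPC and bound the coverage of each contributing configuration. By Remark~\ref{5.2}, every unit in a solvable $2$RPC has covering ratio either $1$ (a single pebble on one vertex, covering only that vertex) or $2.5$ (two pebbles on one vertex, covering that vertex together with its neighbors, so $Cov=5$ when $\deg=4$ as in the grid, i.e.\ ratio $5/2$). So a block with two units is built from one of three combinations: two ``$1$-units'' (total weight $2$), one $1$-unit and one $2.5$-unit (total weight $3$), or two $2.5$-units (total weight $4$). I would treat these three cases separately and in each case show $Cov(f)\le 3\,\lvert f\rvert$, i.e.\ $Cov(f)\le 6$, $Cov(f)\le 9$, and $Cov(f)\le 12$ respectively.

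First I would dispose of the easy extremes. If both units carry a single pebble, then no pebbling move is possible at all, so $Cov(f)=2=\lvert f\rvert<3\lvert f\rvert$; but in fact such a configuration cannot form a \emph{block} (the set of reachable edges is empty), so this case is vacuous or trivially within the bound. If both units carry two pebbles, then $\lvert f\rvert=4$ and we must show $Cov(f)\le 12$. Here the two sources $s_1,s_2$ are at distance at most $2$ (for the induced subgraph on reachable edges to be connected), and each source alone covers at most $5$ vertices in the grid. The key observation is that when $d(s_1,s_2)\le 2$ their closed neighborhoods overlap: if they are adjacent, $\lvert N[s_1]\cup N[s_2]\rvert\le 8$, and the combined configuration can additionally reach at most a bounded number of new vertices by first consolidating pebbles (e.g.\ moving from $s_1$ toward $s_2$ to build a third pebble and push one step further), which I would check keeps the total at $\le 12$. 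When $d(s_1,s_2)=2$ there is a common neighbor, again forcing overlap, and the same bookkeeping applies.

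The main case, and the place I expect the real work, is one $1$-unit and one $2.5$-unit: $\lvert f\rvert=3$, target bound $Cov(f)\le 9$. The $2.5$-unit alone covers $\le 5$ vertices; the extra single pebble sits on some vertex $z$ within the block. The subtlety is that the single pebble at $z$ can ``team up'' with a pebble delivered from the source $s$: if $z$ is adjacent to a vertex $y\in N[s]$ that can receive a pebble, then two pebbles accumulate at $y$ (one native-ish, one from $z$ if $z=y$, or we first move a pebble onto $z$... ) — more carefully, the relevant mechanism is that $f$ restricted to $\{s,z\}$ can sometimes push a pebble one edge beyond $N[s]$, or can reach a neighbor of $z$ that lies outside $N[s]$. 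I would enumerate the positions of $z$ relative to $s$ (using that the block is connected through reachable edges, so $z$ is ``close'' to $N[s]$) and in each subconfiguration count the newly reachable vertices, showing the surplus over the $5$ from the source is at most $4$. This is essentially a finite case check in the grid once the connectivity constraint pins down the geometry; the obstacle is making the enumeration genuinely exhaustive rather than illustrative, and handling the boundary of the grid where degrees drop (which only helps, reducing coverage). Assembling the three cases gives $Cov(f)\le 3\lvert f\rvert$ in every case, proving the lemma. \qed
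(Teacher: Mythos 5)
Your proposal is correct and takes essentially the same route as the paper: the paper's proof is simply a finite case check of the possible two-unit blocks (displayed in Figure~\ref{9}), with the maximum ratio $3$ attained by the two-source ``diagonal'' block, and your enumeration by unit types ($1$-unit vs.\ $2.5$-unit) and source distances is just a more explicit version of that same case analysis, with each target bound ($\le 6$, $\le 9$, $\le 12$) indeed holding. The only point to tighten is the mixed case: the lone pebble at $z$ can contribute only if a pebble is first moved onto $z$ itself (so $z$ must be adjacent to the $2$-pebble source, giving coverage $N[s]\cup N[z]\le 8$), which makes your ``surplus at most $4$'' check immediate.
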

\begin{proof}
Since the block with two units is one of the cases  in Figure \ref{9}, the covering ratio (which is for (b)) equals 3. \qed
\end{proof}
\begin{lemma}\label{5.4}
On $ P_{n}\square P_{4} $ for $ n\geqslant 4 $, the covering ratio of a block in a $ 2 $RPC is at most $ 3 $.
\end{lemma}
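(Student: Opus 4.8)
The goal is to bound the covering ratio of an arbitrary block on $P_n \square P_4$ by $3$. Since a block is a connected combination of units, and by Remark~\ref{5.2} every unit in a $2$RPC has covering ratio $1$ or $2.5$, the worst building blocks are the units of ratio $2.5$ — these come from a vertex carrying $2$ pebbles, whose unit covers itself together with its (up to four) neighbors, i.e.\ $5$ vertices for $2$ pebbles. The plan is to argue that when several such units are combined into a block on $P_n \square P_4$, the coverages must overlap enough that the aggregate ratio cannot exceed $3$. I would proceed by induction on the number of units $k$ in the block, with the base cases $k=1$ (ratio $\leq 2.5 < 3$) and $k=2$ already handled by Lemma~\ref{5.3}.

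\textbf{Key steps.} First I would set up the inductive step: given a block $B$ with $k \geq 3$ units on $P_n \square P_4$, peel off one ``boundary'' unit $U$ — a unit whose source is extremal in the natural left-to-right ordering induced by the $P_n$ factor — so that $B \setminus U$ is still a block (or a disjoint union of blocks, each with fewer units, to which induction applies). Write $|B| = |B\setminus U| + |U|$ and $Cov(B) \le Cov(B\setminus U) + (\text{new vertices covered only by }U)$. By induction $Cov(B\setminus U) \le 3|B\setminus U|$, so it suffices to show the number of genuinely new vertices contributed by $U$ is at most $3|U|$; in fact, because $U$ attaches to the rest of the block, at least one vertex in its coverage (the attachment vertex or a vertex on the connecting path) is shared, so a ratio-$2.5$ unit contributes at most $5 - 1 = 4 < 3\cdot 2$ new vertices against its $2$ pebbles only when isolated, and strictly fewer once connected. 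The crucial geometric input is that $P_n \square P_4$ has width exactly $4$: a single source can cover at most one full ``column'' of $4$ vertices plus parts of the two adjacent columns, and I would verify by a short case analysis (source in row $1,2,3$ or $4$ of its column) that no unit covers a $2\times$-worth of fresh column area, which is what forces the $3$ rather than a larger constant that would be available on wider grids.

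\textbf{Main obstacle.} The delicate part is the bookkeeping of shared vertices when units are combined in configurations where a pebbling move crosses between units — recall a move may deposit pebbles beyond the restricted bound, so a block's reachable set can be larger than the union of the individual unit coverages. I would need to show that any such ``cross-unit'' gain is already paid for by the overlap forced by connectivity, i.e.\ that the reachable-edge-connectivity requirement in the definition of a block guarantees the coverages of adjacent units intersect in the grid. Concretely, if two units are joined so that their induced reachable subgraphs share an edge, their vertex coverages share at least the two endpoints of that edge, giving at least one ``saved'' vertex per junction; summing over the $k-1$ junctions of a tree-like arrangement of the $k$ units yields $Cov(B) \le \sum |U_i| \cdot r_i - (k-1)$ where each $r_i \le 2.5$, and on the width-$4$ grid this is $\le 2.5\cdot(\text{pebbles}) - (k-1) \le 3\cdot(\text{pebbles})$ once one checks the arithmetic using $|U_i| \ge 1$ and that a ratio-$2.5$ unit uses exactly $2$ pebbles. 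The remaining care is ensuring the peeled-off boundary unit argument does not break when the block is not tree-like (has a reachable cycle); there I would use that an extra junction only removes more vertices from the count, strengthening the inequality, so the tree case is the extremal one.
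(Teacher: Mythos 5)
Your induction skeleton (base cases from Remark~\ref{5.2} and Lemma~\ref{5.3}, then peel off one unit) is the same as the paper's, but the quantitative heart of your inductive step is wrong. Your key inequality $Cov(B)\leqslant \sum_i Cov(U_i)-(k-1)$, i.e.\ the claim that the ``cross-unit gain is already paid for by the overlap forced by connectivity,'' is false. Take $u$ an interior vertex of $P_n\square P_4$ with $f(u)=2$ and a neighbour $v$ with $f(v)=1$: the two unit coverages are $N[u]$ (five vertices) and $\{v\}$, sharing only the single vertex $v$, yet the block covers all of $N[u]\cup N[v]$, which is eight vertices. Here $\sum_i Cov(U_i)-(k-1)=5+1-1=5<8$, so one shared vertex at a junction can unlock up to three genuinely new covered vertices, and the synergy is \emph{not} absorbed by the overlap. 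The same example also breaks your earlier bookkeeping step: a ratio-$2.5$ unit attached to the rest of the block does not contribute ``at most $5-1=4$ new vertices''; depending on which unit you peel off, the added unit's contribution can exceed its own stand-alone coverage. So the argument as written does not establish the bound, even though you correctly identified cross-unit moves as the main obstacle.

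The paper resolves exactly this point differently: after decomposing $B$ into $B'$ and one unit $U$, it does not compare $Cov(B)$ with the union of unit coverages at all, but bounds the \emph{excess} coverage created by adding $U$ directly on the width-$4$ grid --- at most three new vertices when $U$ carries one pebble and at most six when it carries two --- and then combines this with the inductive hypothesis via $\frac{Cov(B')+3}{|B'|+1}\leqslant 3$ and $\frac{Cov(B')+6}{|B'|+2}\leqslant 3$. To repair your proof you would need an estimate of this ``excess per pebble of the added unit'' type (for instance: a pebble can only be delivered to the $1$-pebble vertex $v$ through some neighbour $w$ that is already covered by $B'$, so the new vertices lie in $N(v)\setminus\{w\}$, giving at most three), rather than the union-minus-junctions count, which the pebbling dynamics simply do not obey.
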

\begin{proof}
We use induction. For the base case, the block that consists of only one source has a covering ratio $ 2.5 $ or $ 1 $ by Remark \ref{5.2}. For the block with two units, the result follows by Lemma \ref{5.3}.
For $ k\geq 1 $, suppose that the covering ratio of a block that consists of $ k $ units is no more than $ 3 $. Note that a block $ B $ with  $ k+1 $ sources can be decomposed into a block $ B' $ with $ k $ sources and a unit $ U $. By the inductive hypothesis,  the covering ratio of $ B' $ and $ U $ is at most $ 3 $. The block  $ B $ can have a covering ratio greater than three,  only if $ B' $ and $ U $ interact (If two blocks have a vertex in common, then we say they interact. The interaction vertices are the vertices covered by both blocks. A block can be thought of as several interacting units). Since the unit $ U $ with one pebble can cover at most three  excess vetrices while $ U $ with two pebbles can cover at most six excess vetrices, so $ \dfrac{Cov(B')+3}{\vert B'\vert +1}\leqslant 3$ and $ \dfrac{Cov(B')+6}{\vert B'\vert +2}\leqslant 3$. Therefore the result follows.\qed
\end{proof}
 
\begin{theorem}
For $ n\geqslant 4 $, $ \pi^{*}_{2}(P_{n}\square P_{4})= $
\newcommand{\threepartdef}{6}
{
$\left\{\begin{array}{lll}
\frac{4n+6}{3};		&	\mbox{if~~}		 n\equiv 0 (mod~3) \\
\frac{4n+5}{3};		&	\mbox{if~~}		 n\equiv 1 (mod~3) \\
\frac{4n+4}{3};		&	\mbox{if~~}		 n\equiv 2 (mod~3) 
\end{array}
\right.$
}.
 \end{theorem}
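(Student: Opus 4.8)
The plan is to prove the two inequalities $\pi^{*}_{2}(P_{n}\square P_{4})\le \lceil\frac{4n+4}{3}\rceil$ and $\pi^{*}_{2}(P_{n}\square P_{4})\ge \lceil\frac{4n+4}{3}\rceil$ separately. Since $\lceil\frac{4n+4}{3}\rceil$ equals $\frac{4n+6}{3}$, $\frac{4n+5}{3}$, $\frac{4n+4}{3}$ according as $n\equiv 0,1,2\pmod 3$, this is exactly the displayed formula, so it is convenient to work throughout with the single expression $\lceil\frac{4n+4}{3}\rceil$.

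For the upper bound I would, for each residue of $n$ modulo $3$, write down an explicit solvable $2$RPC of the claimed weight and verify its solvability directly, displaying it in a figure as was done for $P_{n}\square P_{3}$ in Figure \ref{8}. The configuration is periodic with period three in the long direction: the interior of the strip is built from copies of the ratio-$3$ block of Figure \ref{9}(b), realised here as two $2$-pebble sources on a diagonally adjacent pair of vertices in the two middle rows. One such pair already reaches all four vertices of two consecutive columns together with the two middle vertices of each flanking column, and when several of them are chained so that their interaction vertices line up, rows $1$ and $4$ of the flanking columns also become covered at no extra cost. Because no such block can reach the four corner vertices $(1,1),(4,1),(1,n),(4,n)$, the periodic pattern is completed near each end of the strip by a short, residue-dependent boundary pattern that makes the two corners of that end reachable; a count of the resulting configuration then yields exactly $\lceil\frac{4n+4}{3}\rceil$ pebbles in each of the three cases. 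Writing down the boundary patterns and checking solvability carefully is the more delicate part of this direction, since one must confirm that chaining the interior blocks really does recover rows $1$ and $4$ of the flanking columns and that nothing is left uncovered at the ends.

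For the lower bound I would use the block/covering-ratio machinery of this section. Let $f$ be a solvable $2$RPC on $P_{n}\square P_{4}$ and decompose its sources into blocks $B_{1},\dots,B_{m}$ as in the framework above, so that $\sum_{i}|B_{i}|=w(f)$ and the coverages of the $B_{i}$ together meet every vertex, whence $\sum_{i}\mathrm{Cov}(B_{i})\ge 4n$; by Lemma \ref{5.4}, $\mathrm{Cov}(B_{i})\le 3|B_{i}|$ for each $i$. The key additional input is that reaching a corner of the grid is costly: if $B_{i}$ covers $q_{i}$ of the four corners $(1,1),(4,1),(1,n),(4,n)$, then $\mathrm{Cov}(B_{i})\le 3|B_{i}|-q_{i}$. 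Granting this and summing,
\[
3\,w(f)=\sum_{i}3|B_{i}|\ \ge\ \sum_{i}\bigl(\mathrm{Cov}(B_{i})+q_{i}\bigr)\ \ge\ 4n+4,
\]
so $w(f)\ge \lceil\frac{4n+4}{3}\rceil$, completing the proof.

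The step I expect to be the main obstacle is the corner-cost inequality $\mathrm{Cov}(B_{i})\le 3|B_{i}|-q_{i}$. To prove it one examines how a block can place a pebble on a corner, say $(1,1)$: since that vertex has degree $2$, this forces either a source at $(1,1)$ or at $(2,1)$, or the accumulation of two pebbles on $(1,2)$ or on $(2,1)$, and in each case the low degrees near the corner obstruct the block from matching the ratio-$3$ configuration of Figure \ref{9}(b) — which, by Lemmas \ref{5.3} and \ref{5.4}, is essentially the only way a block attains ratio $3$, and never covers a corner — so the block forfeits at least one unit of coverage for that corner; covering two corners in a common end column, or (only for small $n$) corners at both ends within a single block, costs at least two by the same analysis, which is all that the asserted inequality needs. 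Finally I would note that $n\ge 4$ keeps the periodic construction and this case analysis non-degenerate, with the handful of genuinely small instances (essentially $n=4,5$) checked by hand.
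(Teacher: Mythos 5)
Your plan has the same overall architecture as the paper's proof: the upper bound comes from exhibiting explicit patterns (the paper's Figure \ref{10} plays exactly the role of your periodic construction with end corrections), and the lower bound comes from decomposing a solvable $2$RPC into blocks and using the covering-ratio bound of Lemma \ref{5.4}. Where you genuinely differ is in how the additive constant is extracted: the paper appeals to the requirement that every block in an optimal configuration be sharp-ended (a structural notion imported from \cite{Xue}) and then reads the value off the patterns, whereas you formulate a quantitative corner-cost inequality $Cov(B)\leq 3\vert B\vert - q$ and sum it over the maximal blocks. That summation is sound (maximal blocks have disjoint coverages, and two blocks that could jointly reach a new vertex would already share a covered vertex), the target value $\lceil\frac{4n+4}{3}\rceil$ agrees with the displayed formula, and the bound is attainable (for instance $f(2,1)=2$, $f(3,1)=1$, $f(3,3)=2$, $f(2,4)=2$ is a solvable $2$RPC of weight $7$ on $P_{4}\square P_{4}$), so your route, once completed, is a legitimate and arguably more explicit alternative to the paper's appeal to sharp-endedness.

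The caution is that the corner-cost inequality carries all the weight and is exactly where your sketch is thin. It is tight (for $n\equiv 2\pmod 3$ the optimum is a single block with $Cov=4n$, $\vert B\vert=\frac{4n+4}{3}$ and $q=4$), so there is no slack to give away, and your justification leans on a claim that Lemmas \ref{5.3} and \ref{5.4} do not supply: they only bound the ratio by $3$; they do not say that the block of Figure \ref{9}(b) is essentially the only ratio-$3$ block (arbitrarily long chains of such diagonal pairs also attain ratio exactly $3$). Hence the assertion that reaching a corner forfeits one unit of coverage needs its own induction or case analysis near the end columns --- this is precisely the work the paper compresses into ``sharp-ended''. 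Also, for the final step $3w(f)\geq 4n+4$ you need the loss of a block covering $q$ corners to be at least $q$ for every $q\leq 4$ (a single block can cover all four corners), so the remark that several corners ``cost at least two'' is not sufficient as stated.
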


 \begin{proof}
 According to Lemma \ref{5.3}, the maximum covering ratio occurs, when   all vertices  of  $ P_{n}\square P_{4} $  covered by  block (b) that has at most ratio $ 3 $ and it also covers the bandwidth of the graph. Therefore the weight of solvable $ 2 $RPC is $  \pi^{*}_{2}(P_{n}\square P_{4}) $. 
 Since  every block used in the optimal configuration on $ P_{n}\square P_{4} $ must be sharp-ended, so one of the cases in Figure \ref{10} occurs and therefore we have the result by  patterns. \qed

\begin{figure}[ht]
\centering
\includegraphics[scale=1.1]{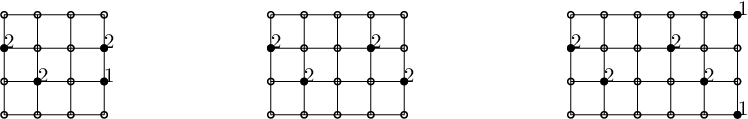}
\caption{2-restricted optimal pebbling configuration of $ P_{n}\square P_{4} $, for $ n=4, 5, 6 $.}\label{10}
\end{figure}
 \end{proof}

We close this section with the following problem:

\noindent {\bf Problem 1}: Determine $\pi^{*}_{2}$ for grid graphs $P_{n}\square P_{m}$, for $n, m\geqslant 5$ ?

\section{2-restricted optimal pebbling in some products of graphs}
In this section, we obtain some results for the  $2$-restricted optimal pebbling of  corona and neighborhood corona of some specific graphs. Let recall the definitions of these two products.  
 The corona product $G\circ H$ of two graphs $G$ and $H$ is defined as the graph obtained by taking one copy of $G$ and $\vert V(G)\vert $ copies of $H$ and joining the $i$-th vertex of $G$ to every vertex in the $i$-th copy of $H$.
 The neighborhood corona product $ G\star H $ of two graphs $ G $ and $ H $ is defined as the graph obtained by taking one copy of $ G $ and $\vert V(G)\vert $ copies of $H$ and joining the neighbours of the $i$-th vertex of $G$
to every vertex in the $i$-th copy of $ H $.

We need the following result: 
\begin{corollary}{\rm \cite{Alikhani}}
For any graph $ H $ and any complete graph $ K_{n} $,
\begin{itemize}
\item[i)]
$ \pi^{*}_{2}(K_{n}\circ H)=4 $ for $ n\geqslant 3 $,
\item[ii)]
$ \pi^{*}_{2}(K_{n}\star H)=3 $ for $ n\geqslant 2 $.
\end{itemize}
\end{corollary}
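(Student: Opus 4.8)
The plan is to prove, for each of (i) and (ii) separately, a matching upper and lower bound, invoking Theorem~\ref{1} for the lower bounds and two explicit economical configurations for the upper bounds. Throughout write $v_1,\dots,v_n$ for the vertices of $K_n$ and $H_1,\dots,H_n$ for the $n$ attached copies of $H$, and note that one must assume $V(H)\neq\emptyset$, since otherwise both products equal $K_n$ (for which $\pi^{*}_{2}=2$) and the statement fails.

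\textbf{Part (i).} For the upper bound I would take $f(v_1)=f(v_2)=2$ and $f\equiv 0$ elsewhere, a $2$RPC of weight $4$, and verify solvability by inspection: $v_1,v_2$ already carry a pebble; every $v_j$ with $j\geq 3$ gets one by a single move from $v_1$ along the edge $v_1v_j$ of $K_n$; every vertex of $H_1$ (resp.\ $H_2$) is hit by one move from $v_1$ (resp.\ $v_2$), since in $G\circ H$ the vertex $v_i$ is adjacent to all of $H_i$; and for $j\geq 3$ a vertex $u\in V(H_j)$ is reached by first shipping one pebble from $v_1$ to $v_j$ and one from $v_2$ to $v_j$, so that $v_j$ holds two pebbles, and then firing into $u$. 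For the lower bound, a vertex of $G\circ H$ can dominate a vertex of $H_i$ only if it lies in $V(H_i)\cup\{v_i\}$; these $n$ sets are pairwise disjoint and each $V(H_i)$ is nonempty, so every dominating set has at least $n$ vertices, while $\{v_1,\dots,v_n\}$ is one, whence $\gamma(K_n\circ H)=n\geq 3$. As $\pi^{*}_{2}\geq 2$ for every nontrivial graph and $\gamma(K_n\circ H)\notin\{1,2\}$, parts (i) and (ii) of Theorem~\ref{1} force $\pi^{*}_{2}(K_n\circ H)\geq 4$, matching the construction.

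\textbf{Part (ii).} First I would record the relevant structure of $K_n\star H$: a vertex of $H_i$ is joined exactly to the neighbours of $v_i$ in $K_n$, i.e.\ to $\{v_j:j\neq i\}$, besides possibly some vertices of $H_i$; in particular $v_i$ is \emph{not} adjacent to any vertex of $H_i$, whereas for $j\neq i$ the vertex $v_j$ \emph{is} adjacent to every vertex of $H_i$. For the upper bound take $f(v_1)=2$, $f(v_2)=1$, $f\equiv 0$ elsewhere, of weight $3$: $v_1,v_2$ carry a pebble; each $v_j$ with $j\geq 3$ and each vertex of $H_j$ with $j\geq 2$ is reached by a single move from $v_1$ (using $v_1v_j\in E(K_n)$ and $v_1\sim V(H_j)$); and each $u\in V(H_1)$, which $v_1$ does not dominate, is reached by moving the two pebbles of $v_1$ onto $v_2$ --- which then holds two pebbles, thanks to its seed pebble --- and then firing into $u$ along the edge $v_2u$. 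For the lower bound, the structure above shows no vertex of $K_n\star H$ is universal ($v_i$ misses $V(H_i)$; any $u\in V(H_i)$ misses $v_i$), so $\gamma(K_n\star H)\geq 2$; then Theorem~\ref{1}(i) gives $\pi^{*}_{2}(K_n\star H)\neq 2$, and therefore $\pi^{*}_{2}(K_n\star H)\geq 3$, again matching the construction.

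\textbf{The main obstacle} is minor and purely a matter of care: it is the reachability bookkeeping in $K_n\star H$, specifically the copy $H_1$, which the pebble-bearing vertex $v_1$ does not dominate. The device is to route through $v_2$, exploiting that the single seed pebble on $v_2$ allows the two pebbles arriving from $v_1$ to fire onward into $H_1$; without that seed, weight $3$ would not be enough. One should also check the degenerate case $n=2$, where the clauses involving ``$j\geq 3$'' are vacuous, and keep the hypothesis $V(H)\neq\emptyset$ in force throughout.
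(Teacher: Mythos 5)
Your proposal is correct, and in fact the paper offers no proof to compare it with: this corollary is quoted from the authors' other manuscript \cite{Alikhani}, so your argument stands as a self-contained verification. Both halves are sound: the explicit configurations ($2+2$ on two hub vertices for $K_n\circ H$, $2+1$ on $v_1,v_2$ for $K_n\star H$) are solvable exactly as you check, and the lower bounds follow correctly from Theorem~\ref{1} used contrapositively --- $\gamma(K_n\circ H)=n\geqslant 3$ rules out the characterizations of $\pi^{*}_{2}=2$ and $\pi^{*}_{2}=3$, while the absence of a universal vertex in $K_n\star H$ rules out $\pi^{*}_{2}=2$. Your routing of pebbles through $v_2$ to reach $H_1$ in the neighborhood corona is precisely the point that needs care, and you handle it, including the degenerate case $n=2$. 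The observation that one must assume $V(H)\neq\emptyset$ (otherwise both products collapse to $K_n$ and the claimed values fail) is a legitimate caveat that the statement as quoted leaves implicit.
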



\begin{theorem}\label{6.2}
For any graph $ H $,
\begin{itemize}
\item[i)]
$ \pi^{*}_{2}(C_{n}\circ H)=n $ for $ n\geqslant 4 $,
\item[ii)]
$ \pi^{*}_{2}(C_{n}\star H)= $ 
\newcommand{\threepartdef}{6}
{
$\left\{\begin{array}{llll}
\frac{3n}{4};		&	\mbox{if~~}		 n\equiv 0 (mod~4) \\
\frac{3n+1}{4};		&	\mbox{if~~}		 n\equiv 1 (mod~4) \\
\frac{3n+2}{4};		&	\mbox{if~~}		 n\equiv 2 (mod~4) \\
\frac{3n+3}{4};		&	\mbox{if~~}		 n\equiv 3 (mod~4) 
\end{array}
\right.$
} ~~for $ n\geqslant 4 $.
\end{itemize}
\end{theorem}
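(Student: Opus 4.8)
The plan is to handle the two parts separately, in each case establishing a matching lower and upper bound.

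\medskip
\noindent\textbf{Part (i): $\pi^*_2(C_n\circ H)=n$ for $n\ge 4$.}
For the upper bound I would exhibit an explicit solvable $2$RPC of weight $n$: place exactly one pebble on each of the $n$ vertices of the cycle $C_n$ and no pebbles on any copy of $H$. Every cycle vertex $c_i$ then already carries a pebble; every vertex $x$ in the $i$-th copy of $H$ is adjacent to $c_i$, and since $c_{i-1}$ and $c_{i+1}$ are both occupied we can move a pebble onto $c_i$ from, say, $c_{i-1}$ and $c_{i+1}$ together (two pebbles feeding $c_i$) giving $c_i$ two pebbles, then step one pebble from $c_i$ to $x$. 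Hence the configuration covers every vertex, so $\pi^*_2(C_n\circ H)\le n$. For the lower bound, note every vertex of every $H$-copy lies at distance $\ge 2$ from all cycle vertices except its own attachment vertex, and the $H$-copies are pairwise nonadjacent; I would argue that with fewer than $n$ pebbles respecting the cap of $2$ one cannot simultaneously reach all vertices — essentially a counting/coverage argument showing each "region" (the closed neighborhood structure around a single $c_i$) needs its own pebble contribution. Alternatively one can invoke $\pi^*(G)\le\pi^*_2(G)$ together with a known value or direct lower-bound argument for $\pi^*(C_n\circ H)$. I expect the lower bound to be the more delicate half here.

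\medskip
\noindent\textbf{Part (ii): $\pi^*_2(C_n\star H)$.}
Here the key structural observation is that in $C_n\star H$, the $i$-th copy of $H$ is joined to the \emph{neighbors} of $c_i$, i.e.\ to $c_{i-1}$ and $c_{i+1}$. So putting a pebble on $c_i$ dominates (in one pebbling step, after acquiring a second pebble) the vertices of copies $H_{i-1}$ and $H_{i+1}$, as well as $c_{i-1},c_{i+1}$. Thus a pebble on $c_i$ "reaches" a block of vertices spanning indices $i-1,i,i+1$ on the cycle plus the attached $H$-copies; this is what produces the covering ratio $4$-ish behavior and the division into residue classes mod $4$. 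For the upper bound I would place two pebbles on selected cycle vertices spaced so that consecutive occupied vertices are distance $\le 3$ apart on $C_n$ (so their reachable blocks overlap and cover everything), choosing roughly $n/4$ such vertices — a pebble on $c_i$ and $c_{i+2}$, say, cover a window of width $4$ — and then count: $\lceil n/4\rceil$ groups, each contributing about $3$ pebbles, matching the stated piecewise formula. One must check boundary cases when $n$ is not divisible by $4$, which is exactly where the $+1,+2,+3$ corrections come from; I would present explicit small patterns ($n=4,5,6,7$) and then a periodic pattern of period $4$. For the lower bound, I would establish (analogously to the block/covering-ratio analysis of Section 4, e.g.\ Lemma \ref{5.3}) that each pebble, or each small group of interacting pebbles, can cover at most a bounded number of vertices — here the relevant ratio forces at least $\lceil 3n/4\rceil$ pebbles, refined to the exact residue expression by a careful accounting of how blocks must be "sharp-ended" at the cyclic wrap-around.

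\medskip
The main obstacle I anticipate is the lower bound in part (ii): one needs a clean argument that no clever overlapping of pebble-reachable regions around the cycle can beat the $3n/4$ ratio, including ruling out savings at the "seam" where the cyclic pattern closes up. I would try to formalize this via a coverage/weight inequality of the form $\mathrm{Cov}(f)\le \tfrac{4}{3}w(f)$ for any solvable $2$RPC on $C_n\star H$ (so that $w(f)\ge \tfrac34|V|$), mirroring the block-decomposition technique already used for $P_n\square P_4$, and then tighten the ceiling to the stated value by checking the four residues individually.
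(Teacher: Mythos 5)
The construction you give for the upper bound in part (i) is not legal pebbling. A pebbling move removes \emph{two} pebbles from a single vertex and places one on a neighbor; with exactly one pebble on each cycle vertex and none elsewhere, no move can ever be made, so the configuration reaches only the cycle vertices and no vertex of any copy of $H$. Your step ``move a pebble onto $c_i$ from $c_{i-1}$ and $c_{i+1}$ together'' combines single pebbles from two different vertices, which the rules do not allow. Since a vertex $x$ in the $i$-th copy of $H$ has $c_i$ as its only neighbor outside that copy, one must accumulate two pebbles on $c_i$; the paper's weight-$n$ configuration therefore places two pebbles on each of two nonadjacent cycle vertices $u,v$, zero pebbles on one vertex in each of the two $uv$-arcs, and one pebble on every remaining cycle vertex, so that a cascade of moves along the arcs lets every $c_i$ acquire two pebbles. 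Your all-ones configuration cannot be repaired without changing its shape, so the only fully specified construction in the proposal fails.

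Beyond that, both lower bounds are promised rather than proved. For (i) the paper argues directly: if only $n-1$ pebbles are used with the cap $f(v)\le 2$, then the number of cycle vertices with no pebbles exceeds the number carrying two, and since each cycle vertex has degree two inside $C_n$ it can gain at most one pebble from each neighbor, some $c_i$ can never hold two pebbles, leaving its copy of $H$ unreachable. For (ii), your proposed inequality $\mathrm{Cov}(f)\le \frac{4}{3}\,w(f)$, hence $w(f)\ge \frac{3}{4}|V|$, cannot be right as literally stated: $|V(C_n\star H)|=n\,(1+|V(H)|)$, while the target value is about $\frac{3n}{4}$, so the accounting must be per cycle position (columns), not per vertex. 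Your period-four upper-bound pattern for (ii) — three pebbles serving four consecutive positions, exploiting that $H_i$ is joined to $c_{i-1}$ and $c_{i+1}$ — does correspond to the paper's decomposition into the blocks $A$ and $B$ (with $\pi^*_2(P_4\star H)=3$) and would yield the stated residue formula once the wrap-around cases are checked, but as submitted the proposal has a genuinely invalid construction in part (i) and no actual lower-bound argument in either part.
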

\begin{proof}
Note that in the $2$-restricted optimal pebbling configuration of graph $ G\circ H $ or $ G\star H $, no pebbles are placed on the vertices of the copies of $ H $, because by picking up the pebble(s) of $i$-th copy of $ H $ and putting them on $i$-th vertex one can be reduced or fixed the total size of the configuration.
\begin{itemize}
\item[i)]
We need $2$ pebbles on the $i$-th vertex of $ C_{n} $, then one can reach a pebble on any vertex of $i$-th copy of $ H $. First we present a solvable $2$RPC $ f $ with size $ \vert f\vert=n $. Since there are two non-adjacent  vertices $ u , v\in V(C_{n}) $, so we put two pebbles on each of them. Consider two vertices $ w, z $ from two distinct $ uv $-paths of graph $ C_{n} $ and put a pebble on remain vertices except $ w, z $. Therefore $ \pi^{*}_{2}(C_{n}\circ H)\leqslant n $.

Now, we claim $ \pi^{*}_{2}(C_{n}\circ H)\geqslant n $. Let $ f $ be $2$RPC with size $ n-1 $ on graph $ C_{n} $ arbitrarily, we prove that any vertex of $ V(C_{n}) $ is not $2$-reachable. There is at least one vertex, say  $ w $ without pebbles in $2$RPC. Since $ \deg (v)=2 $ for $ v\in V(C_{n}) $, so every vertex can receive at most two  excess pebbles (at most a pebble from each neighbour) by sequence of pebbling steps (moves). Suppose that $ u, v $ have $ 2 $ pebbles, so there are at least three vertices without pebbles. Since only one vertex without pebbles can be placed in each $ uv $-path and this is not possible, in fact, the number of zeros is always more than the number of vertices with 2 pebbles, which is a contradiction.

\item[ii)]
In this case, we decompose a graph $ C_{n}\star H $ into the distinct block(s) $ A $ and block $ B $ (see Figure \ref{15}). Since $ \pi_{2}^{*}(P_{n})=\lceil\dfrac{2n}{3}\rceil $ and block $ A $ contains the path graph $ P_{4} $, so $ \pi_{2}^{*}(A\star H)=\pi_{2}^{*}(P_{4}\star H)=3 $. If $ V(C_{n})=\lbrace v_{0},v_{1},...,v_{n-1}\rbrace $, then graph $ C_{n}\star H $  consists of the block(s) $ A $ and a type of block $ B $ whose for $ i=1,...,n-2 $, the vertex $ v_{i}\in V(C_{n}) $ is connected to copy $ H_{i-1} $, $ H_{i+1} $ and vertices $ v_{i-1} $, $ v_{i+1} $. Finally $ v_{0} $ is  connected to $ v_{n-1} $,  $ H_{n-1} $ and likewise $ v_{n-1} $ is connected  to $ H_{0} $. According to $2$-restricted optimal pebbling configuration of graph $  \pi_{2}^{*}(B\star H) $ in Figure \ref{15}, the result follows.
\end{itemize}
\end{proof}

\begin{figure}[ht]
\centering
\includegraphics[scale=0.6]{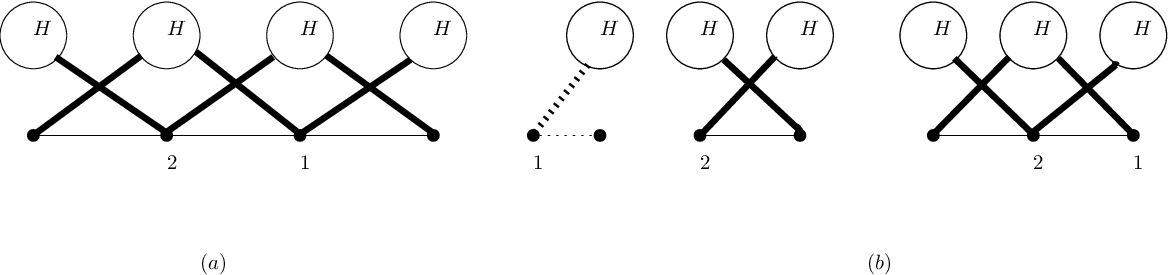}
\caption{(a) block $ A $, (b) Three types of block $ B $}\label{15}
\end{figure} 

\begin{theorem}
For any graph $ H $ and any path graph $ P_{n} $,
\begin{itemize}
\item[i)]
$ \pi^{*}_{2}(P_{n}\circ H)=n+1 $ for $ n\geqslant 4 $,
\item[ii)]
$ \pi^{*}_{2}(P_{n}\star H)=\pi^{*}_{2}(C_{n}\star H) $ for $ n\geqslant 4 $ .
\end{itemize}
\end{theorem}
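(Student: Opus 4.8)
The plan is to treat (i) and (ii) separately, in each case squeezing $\pi^{*}_{2}$ between an explicit solvable $2$RPC and a matching lower bound.

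\textbf{Part (i).} Exactly as in the proof of Theorem~\ref{6.2}, a $\pi^{*}_{2}$-configuration of $P_{n}\circ H$ may be assumed to place no pebble on a copy of $H$: a solvable configuration must in any case be able to accumulate two pebbles on the $i$-th path vertex $v_{i}$ (in order to push one pebble into $H_{i}$), so pebbles lying in the $i$-th copy can be absorbed into $v_{i}$ without increasing the weight. Hence $\pi^{*}_{2}(P_{n}\circ H)$ equals the least weight of a $2$RPC on the path $v_{1}v_{2}\cdots v_{n}$ in which every vertex is $2$-reachable, and conversely any such configuration solves $P_{n}\circ H$. For the upper bound I would put two pebbles on the odd-indexed vertices $v_{1},v_{3},v_{5},\dots$ and, when $n$ is even, one extra pebble on $v_{n}$; a direct check shows every $v_{i}$ then accumulates two pebbles, and the weight is $n+1$.

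For the lower bound I must show that every $2$RPC $f$ on $P_{n}$ making all vertices $2$-reachable has weight at least $n+1$. The main tool is the usual greedy estimate: writing $L_{j}$ (resp.\ $R_{j}$) for the maximum number of pebbles obtainable on $v_{j}$ using $f$ restricted to $\{v_{1},\dots,v_{j}\}$ (resp.\ to $\{v_{j},\dots,v_{n}\}$), one has $L_{j}=f(v_{j})+\lfloor L_{j-1}/2\rfloor$ and $R_{j}=f(v_{j})+\lfloor R_{j+1}/2\rfloor$, and an easy induction gives $L_{j},R_{j}\le 3$; thus each side delivers at most one pebble to a given vertex, so $2$-reachability of $v_{i}$ forces $f(v_{i})+\lfloor L_{i-1}/2\rfloor+\lfloor R_{i+1}/2\rfloor\ge 2$. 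From this one reads off: there is at least one vertex of weight $2$; no vertex of weight $0$ occurs before the first or after the last weight-$2$ vertex (such a zero would receive nothing from one side and at most one pebble from the other); and between two consecutive weight-$2$ vertices at most one vertex can have weight $0$ (a second zero would be shielded on at least one side, the running $L$- or $R$-value having already dropped to $1$). Letting $m$ and $z$ be the numbers of weight-$2$ and weight-$0$ vertices, this gives $m\ge 1$ and $z\le m-1$, whence $w(f)=2m+(n-m-z)=n+(m-z)\ge n+1$.

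\textbf{Part (ii).} One inequality is immediate: $P_{n}\star H$ is a spanning subgraph of $C_{n}\star H$ (the only additional edges being $v_{1}v_{n}$ together with the $\star$-edges from $v_{1}$ to $H_{n}$ and from $v_{n}$ to $H_{1}$), so every solvable $2$RPC on $P_{n}\star H$ is solvable on $C_{n}\star H$; hence $\pi^{*}_{2}(C_{n}\star H)\le \pi^{*}_{2}(P_{n}\star H)$, and the left side is $\lceil 3n/4\rceil$ by Theorem~\ref{6.2}(ii). It remains to produce a solvable $2$RPC on $P_{n}\star H$ of weight $\lceil 3n/4\rceil$. I would cut $v_{1},\dots,v_{n}$ into consecutive blocks of length $4$ when $4\mid n$, and otherwise into $\lfloor n/4\rfloor-1$ blocks of length $4$ together with one block of length $4+(n\bmod 4)\in\{5,6,7\}$; each such block, viewed as an induced subgraph of $P_{n}\star H$, is isomorphic to $P_{\ell}\star H$. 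On each block one places a fixed solvable configuration of $P_{\ell}\star H$ of weight $3,4,5,6$ for $\ell=4,5,6,7$ respectively — for $\ell=4$ the value $\pi^{*}_{2}(P_{4}\star H)=3$ is the one already used in the proof of Theorem~\ref{6.2}, and the configurations $(0,2,1,1,0)$, $(0,2,1,0,2,0)$, $(0,2,1,0,0,2,1)$ work for $\ell=5,6,7$. Since each block is solved internally, the union solves $P_{n}\star H$, with total weight $3\lfloor n/4\rfloor+\lceil 3(n\bmod 4)/4\rceil=\lceil 3n/4\rceil$. Together with the first inequality this yields $\pi^{*}_{2}(P_{n}\star H)=\lceil 3n/4\rceil=\pi^{*}_{2}(C_{n}\star H)$.

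I expect the genuinely delicate point to be the lower bound in part (i): understanding how pebbles can accumulate at a single path vertex from both sides at once, and in particular justifying that two weight-$0$ vertices cannot both survive between consecutive weight-$2$ vertices. The remaining items — the three small constructions on $P_{5}\star H$, $P_{6}\star H$, $P_{7}\star H$, and the observation that a block of $P_{n}\star H$ is isomorphic to $P_{\ell}\star H$ whether or not it sits at an end of the path — are routine.
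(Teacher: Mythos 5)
Your proof is correct, and it takes a genuinely different (and more complete) route than the paper's. For part (i) the paper merely deletes a cycle edge $uw$ from a $\pi^{*}_{2}$-configuration of $C_{n}\circ H$ (with $u$ carrying two pebbles) and adds one pebble on $w$; that argument only yields the upper bound $\pi^{*}_{2}(P_{n}\circ H)\le n+1$, and no lower-bound reasoning is given. You instead reduce to configurations supported on the path, exhibit the explicit weight-$(n+1)$ configuration, and then supply the missing half: the greedy $L_{j},R_{j}\le 3$ estimates show that under the cap $f\le 2$ each side of a vertex delivers at most one pebble, whence no weight-$0$ vertex can lie outside the span of the weight-$2$ vertices and at most one can lie between consecutive weight-$2$ vertices, giving $w(f)=n+(m-z)\ge n+1$; I checked this blocking argument and it is sound. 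For part (ii) the paper only says the proof is ``similar to'' the block decomposition of Theorem~\ref{6.2}(ii); you get the lower bound for free from the observation that $P_{n}\star H$ is a spanning subgraph of $C_{n}\star H$ (so $\pi^{*}_{2}(C_{n}\star H)\le\pi^{*}_{2}(P_{n}\star H)$, the left side being $\lceil 3n/4\rceil$ by Theorem~\ref{6.2}(ii)), and you match it with explicit configurations on blocks isomorphic to $P_{4}\star H,\dots,P_{7}\star H$; the configurations $(0,2,1,0)$, $(0,2,1,1,0)$, $(0,2,1,0,2,0)$, $(0,2,1,0,0,2,1)$ do solve their blocks and the weights sum to $\lceil 3n/4\rceil$, so the sandwich closes. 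Two minor caveats, neither a genuine gap relative to the paper: your one-line justification of the ``no pebbles on the copies of $H$'' reduction (that a solvable configuration ``must be able to accumulate two pebbles on $v_{i}$'') is not literally true when pebbles sit inside $H_{i}$ — the right formulation is the absorption argument the paper itself invokes in Theorem~\ref{6.2}, transferring the pebbles of $H_{i}$ to $v_{i}$ without increasing the weight — and the exactness of the recursions $L_{j}=f(v_{j})+\lfloor L_{j-1}/2\rfloor$ together with the additivity of the left and right contributions rests on the standard fact that on a path moves away from the target never help, which deserves a sentence or a citation.
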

\begin{proof}
\begin{itemize}
\item[i)]
It is clear that the graph $ P_{n}\circ H $ is obtained by removing an edge from the subgraph $ C_{n} $ of $ C_{n}\circ H $. Let a vertex $ w $ be connected to the vertex $ u $ with $ 2 $ pebbles in a $2$-restricted optimal pebbling configuration of graph $ C_{n}\circ H $. By removing edge $ uw $ and putting a pebble on the vertex $ w $, we have $ \pi^{*}_{2}(P_{n}\circ H)=n+1 $.
\item[ii)]
The proof is similar to  part (ii) of Theorem \ref{6.2} with the difference that the $ v_{0} $ does not connect to $ v_{n-1} $ and  $ H_{n-1} $ and likewise $ v_{n-1} $ does not connect to $ H_{0} $.
\end{itemize}
\end{proof}

\end{document}